\definecolor{comcolor}{rgb}{0.9,0.3,0.3}
\definecolor{starcolor}{rgb}{0.3,0.3,0.9}
\definecolor{hscolor}{rgb}{0.9,0.6,0.5}
\definecolor{darkgreen}{rgb}{0.1,0.6,0.3}
\newtheorem{thm}{Theorem}[section]
\newtheorem{lemma}[thm]{Lemma}
\newtheorem{prop}[thm]{Proposition}
\theoremstyle{definition}
\newtheorem{defn}[thm]{Definition}
\newtheorem{rem}[thm]{Remark}
\newcommand{\be}[1]{\begin{equation}\label{#1}}
\newcommand{\ee}{\end{equation}}
\newcommand{\ba}{\begin{array}}
\newcommand{\ea}{\end{array}}
\newcommand{\bal}{\begin{aligned}}
\newcommand{\eal}{\end{aligned}}
\newcommand{\R}{\mathbb{R}}
\newcommand{\N}{\mathbb{N}}
\newcommand{\E}{\mathbb{E}}
\renewcommand{\P}{\mathbb{P}}
\newcommand{\1}{1\hspace{-0.098cm}\mathrm{l}}
\newcommand{\dd}{{\text{d}}}
\begin{document}

\begin{center}
{\Large \bf 
Separation of time-scales for the seed bank diffusion and its jump-diffusion limit
}\\[5mm]

\vspace{0.7cm}
\textsc{Jochen Blath$^1$, Eugenio Buzzoni$^1$ , Adri\'an Gonz\'alez Casanova$^2$ , Maite Wilke Berenguer$^3$} 

\vspace{0.5cm}
$^1$ Institut f\"ur Mathematik, Technische Universit\"at Berlin, Germany.\\$^2$ Instituto de Matem\'aticas, Universidad Nacional Aut\'onoma de M\'exico,  Mexico.\\
$^3$ Fakult\"at f\"ur Mathematik, Ruhr-Universit\"at Bochum, Germany
\end{center}

\begin{abstract}
\noindent We investigate the scaling limit of the seed bank diffusion when reproduction and migration (to and from the seed bank) happen on different time-scales. More precisely, we consider the case when migration is `slow' and reproduction is `standard' (in the original time-scale) and then switch to a new, accelerated time-scale, where migration is `standard' and reproduction is `fast'. This is motivated by models for bacterial dormancy, where periods of quiescence can be orders of magnitude larger than reproductive times, and where it is expected to find non-trivial degenerate genealogies on the evolutionary time-scale.

However, the above scaling regime is not only interesting from a biological perspective, but also from a mathematical point of view, since it provides a prototypical example where the expected scaling limit of a continuous diffusion should (and will be) a jump-diffusion. For this situation, standard convergence results often seem to fail in multiple ways. For example, since the set of continuous paths from a closed subset of the c\`adl\`ag paths in each of the classical Skorohod topologies $J_1, J_2, M_1$ and $M_2$, none of them can be employed for tightness on path-space. Further, a na\"ive direct rescaling of the Markov generator corresponding to the continuous diffusion immediately leads to a blow-up of the diffusion coefficient. Still, one can identify a well-defined limit via duality in a surprisingly non-technical way. Indeed, we show that a certain duality relation is in some sense stable under passage to the limit and allows an identification of the limit, avoiding all technicalities related to the blow-up in the classical generator. The result then boils down to a convergence criterion for time-continuous Markov chains in a separation of time-scales regime, which is of independent interest.

  \par\medskip
  \footnotesize
  \noindent \emph{2010 Mathematics Subject Classification}:
  Primary\, 60K35, \ Secondary\, 92D10.%
  \par\medskip
\end{abstract}

\noindent{\slshape\bfseries Keywords:} Wright-Fisher diffusion, seed bank coalescent, two island model, duality, separation of scales.

\section{Overview and main results}
\label{sec:intro}

Seed bank models have drawn considerable interest in population genetics recently, and, reflecting the complexity of modeling evolutionary processes in biology, there is now a rather large variety of concrete ways to model seed banks in population genetics in the literature, see e.g.\ \cite{SL18} and \cite{BK19} for overviews and further references. For example, many microbial species are able to enter a reversible state of vanishing metabolic activity, and it seems that at any given time, a large fraction of microorganism in nature is in such a dormant state (\cite{LJ11}). One of the most natural ways to model a corresponding seed bank in population genetics is to treat the switching between active and dormant states as `migration' between two `islands' (the active and the dormant population) in the spirit of Wright's classical two island model (\cite{W31, M59}), with the notable difference that reproduction is turned off in the dormant island. In a bi-allelic population (say with types $\{a, A\})$ following such a Wright-Fisher model with two islands, say with $N$ active individuals and $K\cdot N$ dormant individuals (for some suitable positive constant $K$), and individual's switching probability of size $c/N$ in each generation (for some $c>0$), after letting the population size $N \to \infty$ and rescaling time in the classical way by $N$, one obtains the so-called {\em seed bank diffusion} introduced in \cite{BGCKW16}:

\begin{defn}[Seed bank diffusion]
\label{defn:system}
Let $(B_t)_{t\geq 0}$ be a standard Brownian motion and $c, K$ finite positive constants. 
Consider the $[0,1]^2$-valued continuous strong Markov process $(X(t), Y(t))_{t \geq 0}$ that is the unique strong solution of the initial value problem 
\begin{align}
\label{eq:system}
\text{d} X(t) & = c(Y(t) -X(t))\text{d}t + \sqrt{X(t)(1-X(t))}\text{d}B_t, \notag \\[.1cm]
\text{d} Y(t) & =  Kc(X(t) -Y(t))\text{d}t,
\end{align}
with $(X(0), Y(0)) =(x,y) \in [0,1]^2$. Then, $(X(t), Y(t))_{t \geq 0}$ is called the {\em Wright-Fisher diffusion with seed bank} with parameters $c,K$, starting in $(x, y)\in [0,1]^2$.
\end{defn}

Here, we interpret $X(t)$ as the frequency of active individuals of type $a$ and $Y(t)$ as the frequency of dormant individuals of type $a$ at time $t$. 
The underlying scaling assumptions of the above model imply that the time that an individual spends in the dormant population is of order $N$ (see \cite{BEGCKW15} for a detailed discussion of the scaling assumptions). 

However, it has been reported that in various bacterial species, single individuals may stay inactive for extremely long periods of time, several orders of magnitude longer than the reproductive time (\cite{ Ancient_Bacteria_2,Ancient_Bacteria}). Further, it can be expected that in such a scenario, one observes non-classical behavior of the genealogy over long time-scales (e.g., `extinct' types my be reintroduced after long time periods, though this should happen rarely). These considerations motivate the investigation of the scaling limit of the above system when migration between active and dormant states (rate $c$) and reproduction (rate 1) happen on different time-scales. Interesting limits can only be expected when switching to a faster time-scale. Indeed, if one just lets $c \to 0$, then one obtains the trivial (and uninteresting) limit where the active population is completely separate from the dormant population and simply follows a classical Wright-Fisher diffusion. Hence we speed up time by a factor $ 1/c$, as $c \to 0$, and transition to the new  time-scale, where now migration between states happens at rate 1 while reproduction happens `instantaneously'. While such a separation of time-scales can be expected to lead to an interesting process, the na\"ive scaling limit 
\begin{align}
\text{d} X(t) & = (Y(t) -X(t))\text{d}t + ``\infty" \sqrt{X(t)(1-X(t))}\text{d}B_t, \notag \\[.1cm]
\text{d} Y(t) & = K(X(t) -Y(t))\text{d}t,
\end{align}
of course does not make sense (observe the $``\infty"$) in front of the diffusion coefficient due to speeding up time).

Intuitively, fast reproduction should drive the process immediately towards the boundaries, and only rarely should one switch from 0 to 1 or vice versa due to immigration. Yet, it is not completely obvious how to make this idea rigorous. Fortunately, the seed bank diffusion has a nice moment dual, which we can use for further investigations: 

\begin{defn}[Block-counting process of the seed bank coalescent]  
\label{defn:blockcountingSBC}
Let $E := {\N}_0\times {\N}_0 $ equipped with the discrete topology. Let $c,K>0$. We define $(N(t),M(t))_{t\geq 0}$ to be the time-continuous Markov chain with values in $E$ characterized by the rates $r_{(n,m), (\bar n,\bar m)} $ given by: 
\begin{align} \label{eq:ratesSBC}
r_{(n,m), (\bar n,\bar m)} = \begin{cases}
		   \binom{n}{2}& \text{if } (\bar n,\bar m) = (n-1,m,),\\
		    cn & \text{if } (\bar n,\bar m) = (n-1,m+1),\\
		    cKm  & \text{if } (\bar n,\bar m) = (n+1,m-1),
               \end{cases}
\end{align}
when $(\bar n,\bar m),\,(n,m)\in \N_0\times \N_0$ and zero otherwise off the diagonal. 
\end{defn}

In \cite{BGCKW16} it is shown that this continuous Markov chain process satisfies the moment duality
 \begin{equation}\label{eq:duality}
 \E_{(x,y)}\big[X_t^nY_t^m\big]= \E_{(n,m)}\big[x^{N_t}y^{M_t}\big]
 \end{equation}
 for every $t>0$, for every $(x,y)\in [0,1]$ and  for every $n,m\in\N_0$.

In other words, the distribution of the seed bank diffusion at any time $t$ is uniquely determined by the moment dual at said time. It is thus a natural idea to investigate the scaling limit of the moment dual under the same scaling assumption, which is potentially easier to get than the one for the original diffusion, and hope to obtain a well-defined limit which still provides information about the scaling limit of the original diffusion. Still, we encounter technical challenges since the limiting objects might not have standard semi-groups. Indeed, when speeding up time  in the time-continuous Markov chain, some transition rates diverge to $\infty$, thus obstructing direct $Q$-matrix computations and producing states that are vacated immediately. This phenomenon is frequently observed when dealing with ``separation-of-time-scales phenomena''(cf.\ for example \cite[Chapter 6]{W08}) and can in the best case scenario still lead to a scaling limit with ``degenerate'' (non-standard) transition semigroup of the form
\begin{align*}
Pe^{Gt}, \quad t \ge 0,
\end{align*}
where $P$ is a {\em projection} to a subspace of the original state space as a result of ``immediately vacated states'' and $G$ a ``classical'' conservative $Q$-matrix. For {\em time-discrete} Markov chains, this situation was considered e.g.\ in \cite{M98} and also \cite{BBE13}; see further \cite{MN16}. Since this might be of general interest, we give, a detailed ``recipe'' for such proofs for time-continuous Markov chains in Section \ref{subsec:Strategy_of_proof}. 

Subsequently, we apply this strategy to our model in Section \ref{subsec:ancestral_material_coal} and obtain the following results: Recall that we are interested in the (very) long term effect of a seed bank in which individuals change their state seldomly. Heuristically, this means that the switch between active and inactive takes a long time to happen as depicted in Figure \ref{fig:ancient_coalescent}. 
In this setting, the time that two active lines needs to find a common ancestor becomes negligible compared with the time that an ancestral line takes to change its state. The consequence of this is that, in the scaling limit, two active ancestral lines coalesce instantaneously while each line changes of state after a random time of order one, as described by the \emph{ancient ancestral lines process}:
\begin{figure}
\center
\includegraphics[width=.8\textwidth]{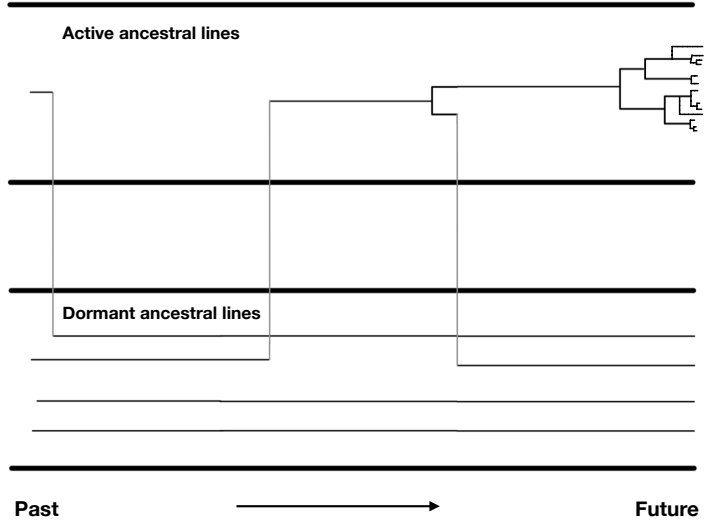}
\caption{For minuscule migration rate $c$, coalescent events occur rapidly in comparison to migration events as depicted above. As a result, passing to the limit on a scale that normalizes the migration rates, means coalescence events occur instantaneously and any randomness lies in the migration events.} \label{fig:ancient_coalescent}
\end{figure}
\begin{defn}[The ancient ancestral lines process]
\label{defn:ancgen}
Let $(n_0,m_0) \in \N_0 \times \N_0$. 
The \emph{$(n_0,m_0)$ ancient ancestral lines process} is the time-continuous Markov chain $(\tilde N(t), \tilde M(t))_{t \geq 0}$ with initial value $(\tilde N(0), \tilde M(0))=(n_0, m_0)$, taking values in the state space 
\begin{align*}
E_{(n_0,m_0)}:=\lbrace 0, \dots, n_0+m_0 \rbrace^2,
\end{align*}
with transition matrix
\begin{align*}
 \Pi(t):=Pe^{tG}, \qquad t>0
\end{align*}
and $\Pi(0)$ is the identity on $E$. $P$ is a projection given by 
\begin{align}\label{eq:Pancient}
 P_{(n,m),(\bar n,\bar m)} := \begin{cases}
                                1, 	& \text{ if } \bar n = 1,\, n \geq 1,\, \bar m = m,\\
                                1, 	& \text{ if } \bar n = n = 0,\, \bar m = m,\\
                                0,	& \text{ otherwise,}
                             \end{cases}
\end{align}
for all sensible $(n,m),\,(\bar n, \bar m) \in E_{(n_0.m_0)}$ and $G$ is a matrix of the form 

\begin{align*}
 G_{(n,m),(\bar n,\bar m)} := \begin{cases}
                              Km,	& \text{ if } \bar n = 1,\, n \geq 0,\, \bar m = m-1,\\
                              n,	& \text{ if } \bar n = 0,\, n \geq 1,\, \bar m = m+1,\\
                              -n-Km,	& \text{ if } \bar n = 1,\, n \geq 1,\, \bar m = m,\\
                              -Km,	& \text{ if } \bar n = n = 0,\, \bar m = m,\\
                              0,	& \text{ otherwise.}
                             \end{cases}
\end{align*}
\end{defn}
The projection acts for any $t>0$, hence this process `immediately' takes values in the smaller space $\{0,1\}\times\{0, \ldots, m_0+1\}$. The first two rates given in the definition of $G$ correspond to the events of resuscitation with immediate coalescence and dormancy. Note that $G$ is, however, not a $Q$-matrix: for any $\bar n \geq 2$ its negative values are off the diagonal. 

Using the techniques of Section \ref{subsec:Strategy_of_proof}, we prove that the \emph{ancient ancestral lines process} arises as the scaling limit of the block-counting process of the seed bank coalescent.

\begin{thm}
\label{thm:newcoal}%
Denote by $(N^c(t), M^c(t))_{t \geq 0}$ the block counting process of the seed bank coalescent as defined in Definition \ref{defn:blockcountingSBC} with migration rate $c>0$ and assume that it starts in  some $(n_0, m_0) \in \N\times\N$, $\P$-a.s.

Furthermore let $(\tilde N(t), \tilde M(t)))_{t \geq 0}$ be the ancient ancestral lines process from Definition \ref{defn:ancgen} with the same initial condition.  Then, 
for any sequence of migration rates $(c_{\kappa})_{\kappa \in \N}$ with $c_{\kappa} \rightarrow 0$ for $\kappa \rightarrow \infty$ 
\begin{align*}
\left(N^{c_{\kappa}}\left(\frac{1}{c_{\kappa}} t\right), M^{c_{\kappa}}\left(\frac{1}{c_{\kappa}} t\right)\right)_{t \geq 0} \xrightarrow{\text{f.d.d.}} \big(\tilde N(t), \tilde M(t)\big)_{t \geq 0}, 
\end{align*}
 as $\kappa \rightarrow  \infty$.
\end{thm}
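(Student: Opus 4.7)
The plan is to apply the general separation-of-time-scales recipe summarised in Section \ref{subsec:Strategy_of_proof}. I begin by decomposing the generator of the rescaled chain as $Q^{(\kappa)} = c_\kappa^{-1} Q_{\mathrm{coal}} + Q_{\mathrm{mig}}$, where $Q_{\mathrm{coal}}$ is the ``fast'' part collecting the coalescence rates $\binom{n}{2}$ for transitions $(n,m)\to(n-1,m)$, and $Q_{\mathrm{mig}}$ is the ``slow'' part collecting the migration rates $n$ for $(n,m)\to(n-1,m+1)$ and $Km$ for $(n,m)\to(n+1,m-1)$. A crucial a priori observation is that $n+m$ is non-increasing under every transition of $(N^c,M^c)$, so the process started at $(n_0,m_0)$ is confined to the \emph{finite} set $\{(n,m):n+m\leq n_0+m_0\}$, which removes any tightness or compactness issue up front.

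Next I analyse the fast chain generated by $Q_{\mathrm{coal}}$. Since coalescence preserves $m$ and reduces $n$ at rate $\binom{n}{2}\geq 1$ whenever $n\geq 2$, the chain absorbs almost surely in the stable set $A := \{0,1\}\times \N_0$, landing in $(1,m)$ when started from $(n,m)$ with $n\geq 1$ and staying at $(0,m)$ otherwise. These absorption probabilities constitute precisely the projection $P$ in \eqref{eq:Pancient}, and on the rescaled time-scale the absorption time is uniformly of order $c_\kappa\to 0$.

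The effective slow dynamics on $A$ is then computed by composition: from $(\bar n,\bar m)\in A$, a slow migration of $Q_{\mathrm{mig}}$ fires and may momentarily escape $A$, whereupon the fast coalescence instantaneously projects back via $P$. Explicitly, from $(1,m)$ one obtains a rate $1$ transition to $(0,m+1)\in A$ and a rate $Km$ transition to $(2,m-1)\notin A$, which $P$ sends to $(1,m-1)$; from $(0,m)$ the only event is a rate $Km$ transition to $(1,m-1)\in A$. These rates coincide with the restriction of $G$ to $A$, confirming that the limiting chain (obtained by projecting the initial state via $P$ and then evolving under $G|_A$) is precisely the ancient ancestral lines process of Definition \ref{defn:ancgen}.

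Feeding this identification into the recipe of Section \ref{subsec:Strategy_of_proof} delivers finite-dimensional convergence. I expect the principal obstacle to be bridging the ``degenerate'' form $\Pi(t)=Pe^{tG}$ of the limit semigroup, where $G$ is not a bona fide $Q$-matrix, and the honest CTMC structure of the post-boundary-layer dynamics. Concretely, the recipe should require, first, uniform control of the fast-chain hitting time of $A$ realising the projection in a boundary layer of rescaled length $O(c_\kappa)$, and second, verification that $Q_{\mathrm{mig}}$ composed with $P$ yields a genuine $Q$-matrix on $A$. Both ingredients are direct consequences of the explicit bounded rates and finiteness of the state space established above, so once they are checked the recipe closes the argument, and f.d.d.\ convergence follows by iterating the one-step convergence via the Markov property.
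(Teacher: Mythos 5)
Your proposal is correct and follows essentially the same route as the paper: the fast/slow split into coalescence and migration, the finiteness of the state space via monotonicity of $n+m$, the identification of $P$ as the absorption law of the pure-coalescence chain on $\{0,1\}\times\N_0$, and the identification of $G$ as the $P$-projected migration rates are exactly the ingredients the paper feeds into the recipe of Section \ref{subsec:Strategy_of_proof}. The only difference is presentational: you phrase the decomposition at the generator level, whereas the paper implements it through the explicit discretization $a_{\kappa}=c_{\kappa}^{-2}$, $b_{\kappa}=c_{\kappa}^{-3}$ and verifies condition \eqref{eq:cond2} by dominating the absorption time by a sum of geometric random variables — which is precisely the uniform boundary-layer control you correctly flag as the remaining thing to check.
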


Recall from \eqref{eq:duality} that for each fixed $\kappa\in \N$ the process $(N^{c_{\kappa}}(t), M^{c_{\kappa}}(t))_{t \geq 0}$ is the moment dual of the seed bank diffusion $(X^{c_{\kappa}}(t), Y^{c_{\kappa}}(t))_{t \geq 0}$, where again indicate the value of the migration rate by the superscript $c_{\kappa}$. As we will see in Section \ref{sec:limitsdiff} this moment duality is the key ingredient that allows to formalize the proof of convergence of this sequence of diffusions and the existence of the limit as a  Markov process, which is not a diffusion and ``essentially'' has state space $\{0,1\}\times [0,1]$, as described in Figure \ref{fig:ancient_diff}.
\begin{thm}
\label{thm:anc_diff}
 Let $(X^{c}(t), Y^{c} (t))_{t \geq 0}$ be the seed bank diffusion given in Definition \ref{defn:system} with migration rate $c>0$. There exists a Markov process $(\tilde X(t),\tilde Y(t))_{t\geq 0}$ on $[0,1]^2$ such that for any sequence of migrations rates with $c_{\kappa}\rightarrow 0$ for $\kappa\rightarrow\infty$
 \begin{align*}
  \left(X^{c_{\kappa}}\left(\frac{1}{c_{\kappa}}t\right), Y^{c_{\kappa}} \left(\frac{1}{c_{\kappa}}t\right)\right)_{t \geq 0} \xrightarrow{f.d.d.} (\tilde X(t),\tilde Y(t))_{t\geq 0}
 \end{align*}
as $\kappa\rightarrow\infty$. Furthermore, $(\tilde X(t),\tilde Y(t))_{t\geq 0}$ is characterized as the moment dual of the ancient ancestral lines process from Definition \ref{defn:ancgen}.
\end{thm}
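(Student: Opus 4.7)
The plan is to run the entire proof through the moment duality \eqref{eq:duality} and Theorem \ref{thm:newcoal}, without ever leaving the dual side for a path-space or generator argument. One-dimensional marginals are obtained by upgrading convergence of monomial moments to weak convergence via Stone--Weierstrass on the compact state space $[0,1]^2$; the Markov structure of the limit is inherited from the dual through the duality; and finite-dimensional distributions are established by chaining the Markov property and duality.

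\textbf{One-dimensional marginals and the limit semigroup.} Substituting $t/c_\kappa$ into \eqref{eq:duality} yields, for every $(n,m)\in\N_0^2$, $t\geq 0$ and $(x,y)\in[0,1]^2$,
\begin{equation*}
\E_{(x,y)}\bigl[X^{c_\kappa}(t/c_\kappa)^{n}\, Y^{c_\kappa}(t/c_\kappa)^{m}\bigr] = \E_{(n,m)}\bigl[x^{N^{c_\kappa}(t/c_\kappa)}\, y^{M^{c_\kappa}(t/c_\kappa)}\bigr],
\end{equation*}
and Theorem \ref{thm:newcoal} together with bounded convergence (the right-hand integrand lies in $[0,1]$) gives the limit $\E_{(n,m)}[x^{\tilde N(t)}\, y^{\tilde M(t)}]$. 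Compactness of $[0,1]^2$ and Stone--Weierstrass then upgrade this convergence of all monomial moments to weak convergence of the marginals towards a probability kernel $\mu_{(x,y),t}$, and setting $T_t f(x,y) := \int f\,d\mu_{(x,y),t}$ defines a positive, constant-preserving, contractive operator on $C([0,1]^2)$. The identity $T_{s+t}h_{n,m} = T_s T_t h_{n,m}$ on the monomials $h_{n,m}(x,y) = x^n y^m$ (enough by density) reduces to $\Pi(s+t) = \Pi(s)\Pi(t)$ on the dual, which holds because $e^{tG}$ leaves the image of $P$ invariant (visible from the rate structure of $G$). Applying the Markov property of $(\tilde N, \tilde M)$ at time $s$ and invoking the duality twice then produces the desired semigroup identity, so $(T_t)_{t\geq 0}$ is a Markov semigroup defining the process $(\tilde X, \tilde Y)$, characterized by construction as the moment dual of $(\tilde N, \tilde M)$.

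\textbf{Finite-dimensional distributions.} Induct on the number of time points. For $0 \leq t_1 < t_2$, condition on $\calF_{t_1/c_\kappa}$ in the four-moment expectation, apply duality \eqref{eq:duality} to the inner increment, use Fubini, and apply duality once more; with two independent copies $(N^{(i),c_\kappa}, M^{(i),c_\kappa})$, $i=1,2$, of the block-counting process this gives
\begin{align*}
&\E_{(x,y)}\bigl[X^{c_\kappa}(t_1/c_\kappa)^{n_1} Y^{c_\kappa}(t_1/c_\kappa)^{m_1} X^{c_\kappa}(t_2/c_\kappa)^{n_2} Y^{c_\kappa}(t_2/c_\kappa)^{m_2}\bigr] \\
&\qquad = \E^{(2)}_{(n_2,m_2)}\, \E^{(1)}_{\bigl(n_1 + N^{(2),c_\kappa}((t_2-t_1)/c_\kappa),\, m_1 + M^{(2),c_\kappa}((t_2-t_1)/c_\kappa)\bigr)}\bigl[x^{N^{(1),c_\kappa}(t_1/c_\kappa)}\, y^{M^{(1),c_\kappa}(t_1/c_\kappa)}\bigr].
\end{align*}
Because $N+M$ is non-increasing along the block-counting dynamics, the random initial condition of the inner copy takes a.s.\ only finitely many values, bounded uniformly in $c_\kappa$. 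A double application of Theorem \ref{thm:newcoal} (first to the outer copy, then to the inner copy for each of those finitely many initial conditions) combined with bounded convergence transfers the limit onto the right-hand side, yielding exactly the two-time mixed moment of $(\tilde X, \tilde Y)$. Iterating over more time points and invoking Stone--Weierstrass once more establishes convergence of all finite-dimensional distributions.

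\textbf{Main obstacle.} The conceptual core is the recognition that the would-be blow-up of the diffusion coefficient and the obstruction to Skorohod tightness are artefacts of working with paths or generators rather than with semigroups; once the dual viewpoint is adopted the argument collapses to a pair of layered bounded-convergence steps. The only genuinely delicate technical point is passing to the limit inside the iterated dual expression, where the inner chain starts from a random initial condition depending on $c_\kappa$. The non-increase of $N+M$ along the block-counting dynamics is exactly what keeps that initial condition a.s.\ in a finite set independent of $c_\kappa$, making the chained bounded-convergence argument go through and yielding the Markov property of $(\tilde X, \tilde Y)$ as a free consequence of the dual semigroup property.
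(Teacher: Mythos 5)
Your argument is correct and shares the paper's overall philosophy (prove convergence entirely through the moment duality, never touching generators or path space), and your one-dimensional step is essentially the paper's: moment convergence via duality plus Theorem \ref{thm:newcoal}, upgraded to weak convergence on the compact cube — the paper cites the multivariate Hausdorff moment problem where you use tightness plus Stone--Weierstrass, which amounts to the same thing. Where you genuinely diverge is in the multi-time step and in the construction of the limit. The paper (in Theorem \ref{thm:fdd_diff}) stays on the diffusion side: it writes the $l$-time mixed moment as an iterated integral against the prelimit transition kernels $P_\kappa$, uses the Feller property to pass the one-dimensional convergence through the nested integrals, and then invokes the moment problem and Kolmogorov extension to produce the limit process, inheriting Markovianity from the prelimits. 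You instead push the entire multi-time moment onto the dual side via the chained-duality identity with independent copies of the block-counting process, and exploit that $N+M$ is non-increasing so the random initial condition of the inner copy lives in a fixed finite set; the limit interchange then reduces to a finite sum of products of converging terms. This buys you something concrete: the paper's step "weak convergence implies convergence of integrals when the integrand converges to a continuous bounded function" quietly requires combining weak convergence of the outer measure with pointwise (ideally uniform) convergence of the inner integrand, which is standard but is the one delicate point of their argument, whereas your finite-state reduction makes it trivial. Your semigroup construction with the Chapman--Kolmogorov identity $T_sT_t=T_{s+t}$ reduced to $\Pi(s)\Pi(t)=\Pi(s+t)$ is also sound — the paper records exactly the needed fact $PG=GP=G$, hence $Pe^{sG}Pe^{tG}=Pe^{(s+t)G}$. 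Two cosmetic caveats: Theorem \ref{thm:newcoal} is stated for initial conditions in $\N\times\N$ while your inner copy may start with a zero coordinate (the convergence clearly extends, but you should say so), and you should note, as the paper does, that uniqueness of the moment dual is what makes the limit independent of the particular sequence $(c_\kappa)$.
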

Much like its dual, the limit $(\bar X(t), \bar Y(t))_{t\geq 0}$ is ``degenerate'' in the sense that it does not have a generator because of the discontinuity of its semi-group in $t=0$. However, as we will prove in Proposition \ref{prop:generator_restricted}, if started in $\{0,1\}\times[0,1]$ it coincides in distribution with a jump-diffusion taking values in $\{0,1\}\times[0,1]$ whose generator is given by
\begin{align*}
  \bar A f(x,y) 	&= y(f(1,y)-f(0,y))\1_{\{0\}}(x) + (1-y)(f(0,y)-f(1,y))\1_{\{1\}}(x)\notag \\
			& \qquad + K(x-y)\frac{\partial f}{\partial y} (x,y).
\end{align*}
In particular this means that the limit $(\tilde X(t),\tilde Y(t))_{t\geq 0}$ instantaneously jumps into the smaller state space $\{0,1\}\times[0,1]$.

\begin{figure}
\center
\includegraphics[width=.8\textwidth]{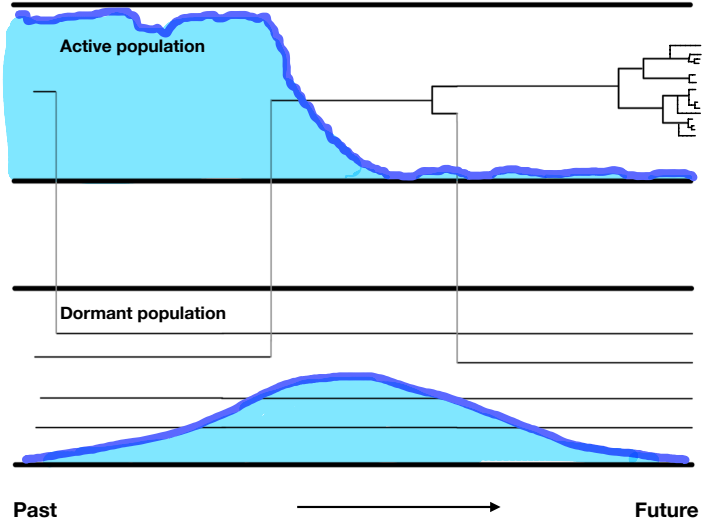}
\caption{For minuscule migration rate $c$, most of the time, the active population will be almost homogeneous, i.e.\ the frequency process of the active population will be very close to one of its boundaries. However, the (rare) migration events of the opposite type from the dormant population will prevent it from staying in that boundary. From time to time, one of these migrations might lead to a change of the predominant type in the active population. This sweep will be extremely fast (of order of the inverse of $c$), and thus instantaneous in the limit. Due to the homogeneity of the active population, the seed bank will mostly receive plants of the type dominant in the active population at that time and will thus evolve almost deterministically.} \label{fig:ancient_diff}
\end{figure}

As in the case of the time-continuous Markov chains, we again state the general result of translating the convergence through duality in \ref{subsec:general_diff} and the apply it with the \emph{ancient material scaling} in Section \ref{subsec:ancient_material_diff}.

\begin{rem}
 Since we believe the methodology used to prove Theorems \ref{thm:newcoal} and \ref{thm:anc_diff} can be applied in many situations, helping those interested in scaling limits of Markov processes that experience a separation of scales, we have separated the general methodology from the example of the \emph{ancient ancestral material scaling}.

 For time-continuous Markov chains, this is done in Section \ref{subsec:Strategy_of_proof} with its key innovation being Lemma \ref{topological}.
 
 If the processes of interest, on the other hand, are the moment duals of a sequence of time-continuous Markov chains, then the convergence in finite dimensional distributions of one sequence of processes can be translated into convergence in finite dimensional distributions for the other, propagating a separation of time-scales where applicable. The strategy of proof is to use the commutative diagram depicted in Figure \ref{fig:comdiag_convergence}. This is the content of Theorem \ref{subsec:general_diff} in Section \ref{subsec:general_diff} and allows us to prove the convergence of a family of diffusions into a non trivial, non diffusion Markov process. 
 \begin{figure}
 
\center
\includegraphics[width=.5\textwidth]{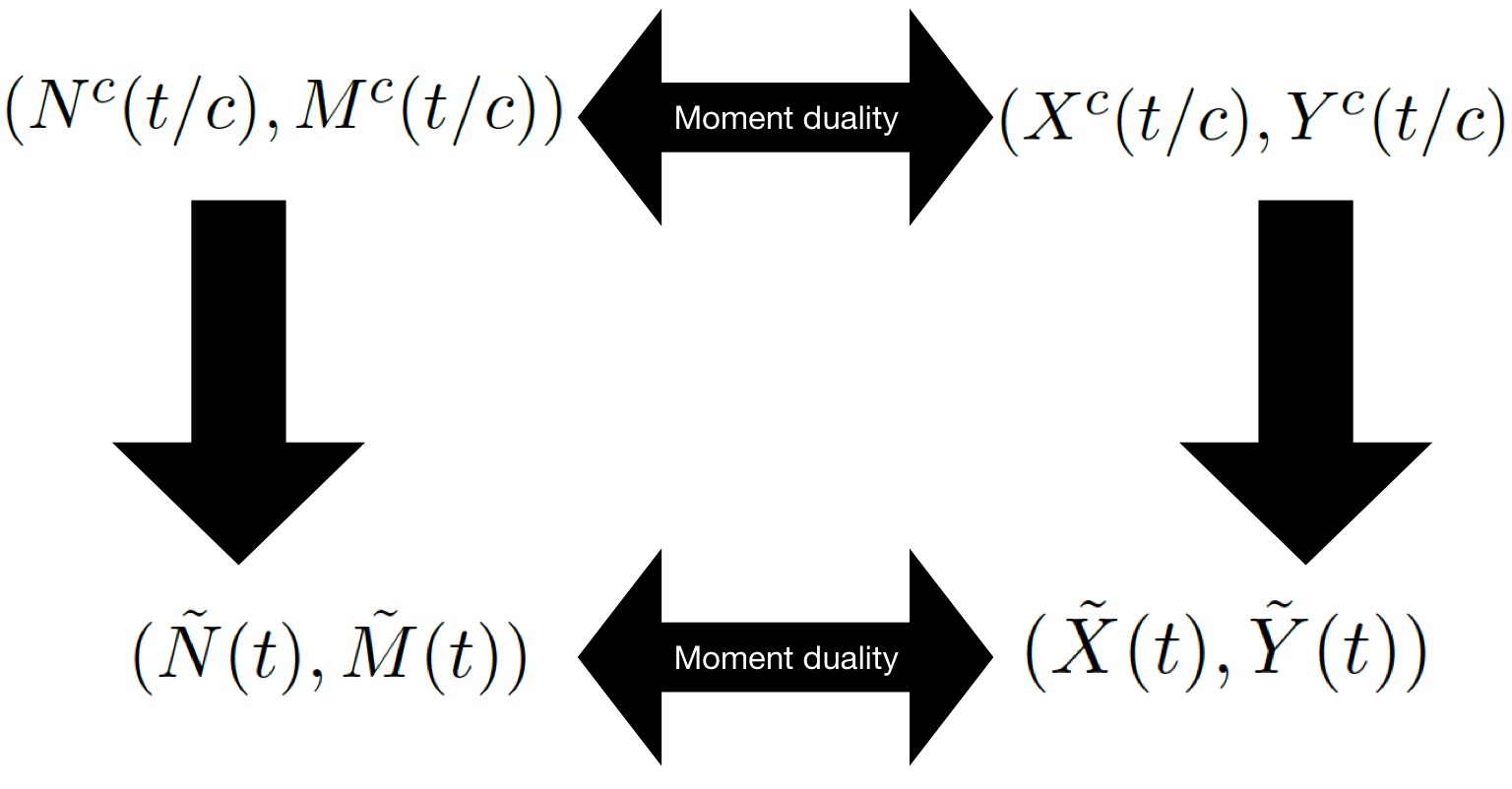}
\caption{Commutative diagram summarizing the relations between the processes considered. The moment duality of the prelimits and the limits is used to conclude the convergence in f.d.d.\ on the right from the convergence of the processes on the left.} \label{fig:comdiag_convergence}
\end{figure}
 
 Once convergence of the finite dimensional distributions is established, it is natural to wonder if it is possible to prove tightness, in order to obtain weak convergence over the Skorohod space with the $J_1$-topology. At first glance maybe surprisingly, it is not hard to see that convergence in any of the Skorohod topologies cannot hold. To see this, observe that there are \textit{sparks} occurring at the events that happen instantaneously in the limit. In these time-points the prelimiting processes visit a state outside the smaller state space of the limit process. In the coalescent set-up, for example, if there is one seed and one plant, in order to lose one block the discrete processes go from the state $(1,1)$ to $(2,0)$ and then $(1,0)$ (in quick succession). On the other hand, the limiting process goes directly from $(1,1)$ to $(1,0)$. Regardless of the time spent in the state $(2,0)$ by the prelimiting processes approaching 0, this makes convergence in any of the Skorohod topologies impossible. However, the set of such time-points has Lebesgue-measure equal to zero, whence convergence in the Meyer-Zheng topology (\cite{Kur91}, page 8) should hold. We do not analyze this in more details in this manuscript.

\end{rem}

\section{Scaling limits for time-continuous Markov chains} 
\label{sec:scaling}

As thoroughly motivated by the example of the \emph{ancient material scaling} in the introductory section, as a first step, we consider scaling limits of time-continuous Markov chains and extend the results for time-discrete Markov chains from \cite{BBE13,M98,MN16}.

\subsection{Separation of time-scales phenomena for time-continuous Markov chains - a strategy}
\label{subsec:Strategy_of_proof}

 Given a sequence of time-continuous Markov chains $(\xi^{\kappa}(t))_{t\geq 0}$, $\kappa \in \N$ with finite state-space $E$ (equipped with a metric $d$), we want to prove its convergence under some time-rescaling $(c_{\kappa})_{\kappa \in \N}$ to a limit $(\xi(t))_{t\geq 0}$ for $\kappa \rightarrow \infty$. 
 
 The idea behind these proofs has three steps:

\begin{itemize}
\item[\textbf{i)}] First, we consider {\em time discretizations} of the original time-continuous Markov chains by considering the time-discrete Markov chains $\eta^{\kappa}(i):=\xi^{\kappa}(i/a_{\kappa}), i \in \N_0$. The  non-negative sequence $(a_{\kappa})_{\kappa \in \N}$ with $a_{\kappa} \rightarrow \infty$, will be chosen to ensure the distance between the discretizations and the original processes to be sufficiently small.

\item[\textbf{ii)}] Secondly, we employ a generalization of Theorem 1 in \cite{M98}, namely Lemma 1.7 in \cite{BBE13}, to establish \emph{convergence of the discretized processes to the desired time-continuous limit} on the new time scale by speeding up the discretized processes by $b_{\kappa}=a_{\kappa}/c_{\kappa}$, that is, we establish the convergence $(\eta^{\kappa}(\lfloor b_{\kappa}t\rfloor ))_{t \ge 0} \rightarrow (\xi(t))_{t \ge 0}$ in finite dimensional distributions.

\item[\textbf{iii)}] Finally, we prove a continuity result to show that the original processes sped up by the factor $b_{\kappa}/a_{\kappa}$, i.e. $(\xi^{\kappa}(b_{\kappa}t/a_{\kappa}))_{t \ge 0}$, converges to the same limit $(\xi(t))_{t \ge 0}$ in finite dimensional distributions. 
\end{itemize}

Since the strategy is not at all restricted to the specific examples we consider, but might be of general interest, we will give the details together with the necessary results here.

{\bf Step i)} Denote by $G^{\kappa}$ the $Q$-matrix of $(\xi^\kappa(t))_{t \geq 0}$ for each $\kappa \in \N$. The rescaling sequence $(a_{\kappa})_{\kappa \in \N}$ needs to be chosen such that for $q_{\kappa}:=\max_{e \in E} \left\{-G^{\kappa}_{e,e}\right\}$
\begin{align}\label{eq:neconejump}
 a_{\kappa} \rightarrow \infty \qquad \text{and} \qquad \frac{q_{\kappa}}{a_{\kappa}} \rightarrow 0, \qquad \text{ for } \kappa \rightarrow \infty.
\end{align}

Define the time discretizations
\begin{align*}
\eta^{\kappa}(i):=\xi^{\kappa}(i/a_{\kappa}), i \in \N_0 
\end{align*}
  of the original sequence of time-continuous Markov chains $(\xi^{\kappa}(t))_{t\geq 0}$, $\kappa \in \N$. 

 As we will see in the proof of Lemma \ref{topological} in \textbf{Step iii)}, \eqref{eq:neconejump} will ensure the step-size to be sufficiently fine for the probability of a jump of $\xi^{\kappa}$ during one time-step of $\eta^{\kappa}$ to tend to 0.

{\bf Step ii)} The next step is to apply the known convergence result for time-discrete Markov chains from \cite{BBE13} to the sequence $(\eta^{\kappa})(i))_{i \in \N_0}$, the assumptions of which we summarize here for the reader's convenience.Let  $\Pi_{\kappa}$ be the transition matrix of $(\eta^\kappa(i))_{i \in \N_0}$.

First, establish a suitable decomposition of $\Pi_{\kappa}$: For the sequence $(b_{\kappa})_{\kappa \in \N}$ with $b_{\kappa}=a_{\kappa}/c_{\kappa} \rightarrow \infty$ write
\begin{equation}
\label{eq:cond1}
\Pi_\kappa = A_\kappa + \frac{B_\kappa}{b_{\kappa}} 
\end{equation}
where $A_\kappa$ is a stochastic matrix that contains only entries of order $1$ and $a_\kappa^{-1}$, and $B_\kappa$ contains only entries of order $1$ and $o(1)$. As we will see below, speeding up time by the factor $b_{\kappa}$ leads in the limit to a separation of time-scales, where the entries in $A_{\kappa}$ give rise to a projection matrix $P$ acting on the probability distributions on $E$, effectively restricting the state space of the limiting time-continuous Markov chain to a subspace of $E$, while the entries of $B_{\kappa}$ yield the infinitesimal generator. 

In order to prove this, first confirm that
\begin{equation}
\label{eq:cond2}
\lim_{C \to \infty}\lim_{\kappa \to \infty} \sup_{r \geq Ca_{\kappa}} \Vert({{A}_{\kappa}})^r -P\Vert =0
\end{equation}
for some matrix $P$. Here we equipped the matrices $A=(A(e,\bar e))_{e,\bar e \in E}$ on $E$ with the matrix norm $\Vert A\Vert:= \max_{e\in E} \sum_{\bar e\in E} |A(e,\bar e)|$.
Note that given \eqref{eq:cond2}, the matrix $P$ is necessarily a projection on $E$, i.e.\ satisfies $P^2=P$, as can be checked by a small calculation. 

Secondly, require that the matrix limit with respect to the matrix norm
\begin{equation}
\label{lemma:twoscales:eq:BNlimit}
G:=\lim_{\kappa\to\infty} P {B}_{\kappa} P \qquad \text{exists}.
\end{equation}

Since $E$ is assumed to be finite, convergence in matrix norm is equivalent to point-wise convergence. Then, by \cite[Lemma 1.7 and Remark 1.8]{BBE13}, we obtain the following convergence (with respect to the matrix norm):
\begin{equation} 
\label{eq:twoscaleslimit}
\lim_{\kappa\to\infty}\Pi_\kappa^{\lfloor t b_{\kappa}\rfloor} = \lim_{\kappa\to\infty}  \left({A}_{\kappa} + \frac{{B}_{\kappa}}{b_{\kappa}}\right)^{\lfloor t b_{\kappa}\rfloor} = Pe^{tG} =:\Pi(t) \qquad \text{for all}\;\; t > 0.
\end{equation}

Note that since $P=P^2$, we have $PG=GP=G$ and hence $Pe^{tG}=e^{tG}P = P-I+e^{tG}$ for any $t\geq 0$. In particular, $(\Pi(t))_{t\geq 0}$ with $\Pi(0):=\text{Id}_E$ is a (non-standard) semi-group and we denote by $(\xi(t))_{t\geq 0}$ the time-continuous Markov chain it generates.

If, last but not least, $\eta^{\kappa}(0) \xrightarrow{w} \xi(0)$, equation \eqref{eq:twoscaleslimit} implies 
\begin{align*}
 (\eta^\kappa(\lfloor b_{\kappa}t\rfloor))_{t\geq0} \xrightarrow{\text{f.d.d.}} (\xi(t))_{t\geq0}, \quad \text{ as } \kappa\rightarrow \infty.
\end{align*}
Here, $\xrightarrow{\text{f.d.d.}}$ denotes convergence of the processes in \emph{finite dimensional distributions}.

{\bf Step iii)} Lastly, we prove that the conditions given in \textbf{Step i)} and \textbf{ii)} are sufficient to also the convergence of original time-continuous Markov chains $(\xi^\kappa(t))_{t \ge 0}$ to the same limit  $(\xi(t))_{t \ge 0}$ on the faster time-scale $b_{\kappa}/a_{\kappa}$ as well. This is summarized in the following lemma.

\begin{lemma}
\label{topological}
Let $(\xi^\kappa(t))_{t \ge 0},\kappa\in\N$ be a sequence of time-continuous Markov chains with finite state space $E$ (equipped with some metric $d$). Let $(a_{\kappa})_{\kappa \in \N}$ and $(b_{\kappa})_{\kappa \in \N}$ be non-negative sequences such that $a_{\kappa}, b_{\kappa}/a_{\kappa} \rightarrow \infty$. 

Define the sequence of time-discrete Markov chains $(\eta^\kappa(i))_{i \in \N_0},\kappa\in\N$ by 
$$
\eta^\kappa(i):= \xi^{\kappa}\left(\frac{i}{a_{\kappa}}\right), \quad i\in \N_0.
$$
Denote by $G^{\kappa}$ the $Q$-matrix of $(\xi^\kappa(t))_{t \geq 0}$ for each $\kappa \in \N$ and set $q_{\kappa}:=\max_{e \in E} \left\{-G^{\kappa}_{e,e}\right\}$. If
\begin{itemize}
\item[a)] $\frac{q_{\kappa}}{a_{\kappa}} \rightarrow 0$ and 
\item[b)] $(\eta^\kappa(\lfloor b_{\kappa}t\rfloor))_{t \ge 0} \xrightarrow{\text{f.d.d.}} (\xi(t))_{t \ge 0}$ as $\kappa \to \infty$, and 
\end{itemize}
then also 
\begin{align*}
\left(\xi^\kappa\left(\frac{b_{\kappa}}{a_{\kappa}}t\right)\right)_{t \ge 0} \xrightarrow{\text{f.d.d.}} (\xi(t))_{t \ge 0} \quad \text{ as }\kappa \to \infty. 
\end{align*}
\end{lemma}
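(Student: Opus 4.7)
The plan is to compare, for each fixed $t>0$, the random variable $\xi^\kappa(b_\kappa t/a_\kappa)$ against $\eta^\kappa(\lfloor b_\kappa t\rfloor) = \xi^\kappa(\lfloor b_\kappa t\rfloor/a_\kappa)$. These are evaluations of the \emph{same} chain $\xi^\kappa$ at two times whose difference lies in $[0, 1/a_\kappa]$. Since assumption (b) already delivers f.d.d.\ convergence for the discretized sequence, it suffices to show that the chain almost surely does not jump within this tiny interval, so that the two random variables agree with probability tending to $1$. Condition (a) is exactly tailored to produce this no-jump estimate.

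To make this quantitative, fix $t>0$ and set $s_\kappa := b_\kappa t/a_\kappa - \lfloor b_\kappa t\rfloor/a_\kappa \in [0,1/a_\kappa]$. Starting $\xi^\kappa$ from any state $e \in E$, the waiting time for the next jump is exponential with rate $-G^\kappa_{e,e} \le q_\kappa$. Conditioning on $\xi^\kappa(\lfloor b_\kappa t\rfloor/a_\kappa)$ and using the (strong) Markov property, the probability of no jump during the interval of length $s_\kappa$ is therefore bounded below by $e^{-q_\kappa s_\kappa} \ge e^{-q_\kappa/a_\kappa}$, which by (a) tends to $1$. Consequently
\[
\P\!\left(\xi^\kappa\!\left(\tfrac{b_\kappa}{a_\kappa} t\right) \ne \eta^\kappa(\lfloor b_\kappa t\rfloor)\right) \le 1 - e^{-q_\kappa/a_\kappa} \xrightarrow[\kappa\to\infty]{} 0.
\]

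For the full f.d.d.\ statement, fix $0 < t_1 < \dots < t_k$ and apply a union bound: the probability that $\xi^\kappa(b_\kappa t_j/a_\kappa) = \eta^\kappa(\lfloor b_\kappa t_j\rfloor)$ for \emph{every} $j = 1,\dots,k$ still tends to $1$. Combined with (b), which says that $(\eta^\kappa(\lfloor b_\kappa t_j\rfloor))_{j=1}^k$ converges in distribution on the finite set $E^k$ to $(\xi(t_j))_{j=1}^k$, a Slutsky-type argument (the two vectors differ with vanishing probability, so they have the same weak limit) yields
\[
\left(\xi^\kappa\!\left(\tfrac{b_\kappa}{a_\kappa} t_j\right)\right)_{j=1}^k \;\Rightarrow\; \bigl(\xi(t_j)\bigr)_{j=1}^k,
\]
which is the claimed f.d.d.\ convergence.

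The only step that requires any genuine work is the no-jump estimate, and this is essentially a one-line calculation once one recognises that assumption (a) is a scale-matching condition: the discretization mesh $1/a_\kappa$ is chosen strictly finer than the characteristic inter-jump time $1/q_\kappa$ of $\xi^\kappa$. Everything else is bookkeeping; the finiteness of $E$ makes the Slutsky step trivial since weak convergence on a finite set is just pointwise convergence of probability mass functions, and there are no issues with choice of metric $d$ on $E$ beyond separating points.
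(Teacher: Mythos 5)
Your proof is correct and follows essentially the same route as the paper's: the key no-jump estimate $\P(\text{jump in an interval of length} \le 1/a_\kappa) \le 1 - e^{-q_\kappa/a_\kappa} \to 0$ from condition (a), combined with assumption (b) at finitely many time points. The only cosmetic difference is that you phrase the final step as a Slutsky-type argument while the paper writes out the equivalent $\varepsilon$-inequalities via the Prohorov metric.
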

Hence, if the conditions given in \textbf{Step i)} and \textbf{ii)} hold, then Lemma \ref{topological} will yield the desired convergence.

\begin{proof}

Note that condition a) was chosen precisely such that 
\begin{align}\label{eq:onejump}
 \P\left\{(\xi^{\kappa}(t))_{t\geq0} \text{ has a jump in } \left(0\;,\;\frac{1}{a_{\kappa}}\right]\right\} \leq 1-\exp\left(\frac{-q_{\kappa}}{a_{\kappa}}\right)\rightarrow 0, \quad \kappa \rightarrow \infty.
 \end{align}
Observe that for the distance between $(\xi^{\kappa}(t))_{t\geq0}$ and $(\eta^{\kappa}(t))_{t\geq0}$ at any time $t \geq 0$ we have
\begin{align*}
 d\left(\xi^{\kappa}\left(\frac{b_{\kappa}t}{a_{\kappa}}\right), \eta^{\kappa}(\lfloor b_{\kappa}t\rfloor)\right)	= d\left(\xi^{\kappa}\left(\frac{b_{\kappa}t}{a_{\kappa}}\right), \xi^{\kappa}\left(\frac{\lfloor b_{\kappa}t\rfloor}{a_{\kappa}}\right)\right) > 0
\end{align*}
\emph{only if} the process $(\xi^{\kappa}(t))_{t\geq0}$ has a jump in the interval $\left(\frac{\lfloor b_{\kappa}t\rfloor}{a_{\kappa}}, \frac{b_{\kappa}t}{a_{\kappa}}\right]$. Since its length can be estimated through
\begin{align*}
 0 \leq \frac{b_{\kappa}t}{a_{\kappa}} - \frac{\lfloor b_{\kappa}t\rfloor}{a_{\kappa}} \leq \frac{1}{a_{\kappa}}
\end{align*}
we can estimate the probability of this event with \eqref{eq:onejump} and  obtain
\begin{align}\label{eq:onejump2}
 \P\left\{d\left(\xi^{\kappa}\left(\frac{b_{\kappa}t}{a_{\kappa}}\right), \eta^{\kappa}(\lfloor b_{\kappa}t\rfloor)\right)>0\right\} \leq 1-\exp\left(\frac{-q_{\kappa}}{a_{\kappa}}\right)\rightarrow 0, \quad \kappa \rightarrow \infty.
\end{align}

In order to prove the convergence of the finite dimensional distributions, recall that weak convergence of measures is equivalent to convergence in the Prohorov metric (see, e.g. \cite{W02}, Section 3.2). Hence, assumption b) yields that for all time points $0 \leq t_0, \ldots, t_l<\infty$, states $e_0, \ldots, e_l \in E$ and any $\varepsilon >0$ sufficiently small there exists a $\bar \kappa \in \N$ such that for all $\kappa \geq \bar \kappa$:
\begin{align*}
 \P\left\{\eta^{\kappa}\left(\lfloor b_{\kappa}t_0\rfloor\right)=e_0, \ldots, \eta^{\kappa}\left(\lfloor b_{\kappa}t_l\rfloor\right)=e_l\right\} \geq \P\left\{\xi^{\kappa}(t_0)=e_0, \ldots, \xi^{\kappa}(t_l)=e_l\right\} - \frac{\varepsilon}{2}.
\end{align*}
Combining this with \eqref{eq:onejump2} we see that for all time points $0 \leq t_0, \ldots, t_l<\infty$, states $e_0, \ldots, e_l \in E$ and any $\varepsilon >0$ sufficiently small there exists a $\bar \kappa \in \N$ such that for all $\kappa \geq \bar \kappa$
\begin{align*}
 \P	&\left\{\xi^{\kappa}\left(\frac{ b_{\kappa}t_0}{a_{\kappa}}\right)=e_0, \ldots, \xi^{\kappa}\left(\frac{b_{\kappa}t_l}{a_{\kappa}}\right)\right\}	\\
	& \qquad \geq \P\Bigg\{ \eta^{\kappa}(\lfloor b_{\kappa}t_0\rfloor)=e_0, \ldots, \eta^{\kappa}(\lfloor b_{\kappa}t_l\rfloor)=e_l, \\
	& \qquad \qquad \qquad d\left(\xi^{\kappa}\left(\frac{ b_{\kappa}t_0}{a_{\kappa}}\right), \eta^{\kappa}(\lfloor b_{\kappa}t_0\rfloor)\right)=\cdots = d\left(\xi^{\kappa}\left(\frac{ b_{\kappa}t_l}{a_{\kappa}}\right), \eta^{\kappa}(\lfloor b_{\kappa}t_l\rfloor)\right)= 0\Bigg\}\\
	& \qquad \geq \P\left\{ \eta^{\kappa}(\lfloor b_{\kappa}t_0\rfloor)=e_0, \ldots, \eta^{\kappa}(\lfloor b_{\kappa}t_l\rfloor)=e_l \right\} - \frac{\varepsilon}{2}\\
	& \qquad \geq \P\left\{\xi^{\kappa}(t_0)=e_0, \ldots, \xi^{\kappa}(t_l)=e_l\right\} - \varepsilon.
 \end{align*}
This implies the convergence of the finite dimensional distributions of $\left(\xi^{\kappa}\left(\frac{b_{\kappa}}{a_{\kappa}}t\right)\right)_{t \geq 0}$  to the finite dimensional distributions of $(\xi(t))_{t \geq 0}$ in the Prohorov metric and hence weakly, which completes the proof. 
\end{proof}

\subsection{The ancient ancestral lines process (and other scaling limits)}\label{subsec:ancestral_material_coal}

Let us apply these theoretical observations to the \emph{ancestral material scaling limit} discussed in Section \ref{sec:intro} to the block-counting process of the seed bank coalescent defined in Definition \ref{defn:blockcountingSBC} with vanishing migration rate $c$. If we simply let $c \to 0$, the limiting object will be a (block counting process of the) Kingman coalescent in the plant population and a constant population of seeds. However, if we speed up time by a factor $1/c \to \infty$, we obtain a new structure given in Definition \ref{defn:ancgen}, thus uncovering a separation of time-scales phenomenon. While the exchange of ancestral lineages between active and dormant states here becomes rare in the original timescale, in the new timescale, this migration will still happen at rate 1 while coalescences in the active population now occur almost instantaneously. Hence, in the limit, for each time $t > n$, there will be at most one active line. 

Theorem \ref{thm:newcoal} establishes the ancient ancestral lines process as scaling limit in finite dimensional distributions of the block-counting process of the seed bank coalescent.

\begin{proof}[Proof of \ref{thm:newcoal}]
We prove the result using the machinery outlined in the previous section with $a_{\kappa} := c^{-2}_{\kappa}$ and $b_{\kappa} := c^{-3}_{\kappa}$. W.l.o.g.\@ assume $c_{\kappa}\leq 1$, for all $\kappa \in \N$.

{\bf Step i)} In analogy to our previous notation abbreviate 
$$
(\xi^{\kappa}(t))_{t \ge 0}:=(N^{c_{\kappa}}(t), M^{c_{\kappa}}(t))_{t \geq 0}
$$ 
and consider a discretized process with time steps of length $a_{\kappa}^{-1} = c^{2}_{\kappa}$ by letting 
$$
\eta^{\kappa}(i):= \xi^{\kappa}(ic^2_{\kappa}), \qquad i \in \N_0.
$$ 
Recalling the rates of this processes as given in Definition \ref{defn:blockcountingSBC} 
\begin{align*}
 q_{\kappa}:=\max_{(n,m) \in E_{(n_0, m_0)}} \left\{-\bar A^{c_{\kappa}}_{(n,m),(n,m)}\right\} \leq \binom{n_0+m_0}{2} + c_{\kappa}(n_0+m_0) + c_{\kappa}K(n_0+m_0)
\end{align*}
whence \eqref{eq:onejump} (and therefore \eqref{eq:onejump2}) hold as required.

{\bf Step ii)}  Let $\Pi_{\kappa}$ be the transition matrix  of $(\eta^\kappa(i))_{i \in \N_0}$. 

One can calculate the transition probabilities of this chain to be
\begin{align*}
 \P	 \{\eta^\kappa(1) & = (\bar n,\bar m) \mid \eta^\kappa(0) = (n,m)\} \phantom{\Big(}	 \\
	& \qquad = \P\left\{(N^{c_{\kappa}}(c^2_{\kappa}), M^{c_{\kappa}}(c^2_{\kappa})) = (\bar n,\bar m) \mid (N^{c_{\kappa}}(0), M^{c_{\kappa}}(0)) = (n,m)\right\} \phantom{\Big(}\\
	& \qquad = \begin{cases}
			  \binom{n}{2}c^2_{\kappa} + o(c^3_{\kappa}), 	& \text{ if } \bar n = n - 1,\, \bar m = m,\\
			  c_{\kappa}nc^2_{\kappa} + o(c^3_{\kappa}),	& \text{ if } \bar n = n - 1,\, \bar m = m+1,\\
			  c_{\kappa}Kmc^2_{\kappa} + o(c^3_{\kappa}),	& \text{ if } \bar n = n+1,\, \bar m = m-1,\\
			  1 -  \binom{n}{2}c^2_{\kappa} - c_{\kappa}nc^2_{\kappa} - c_{\kappa}Kmc^2_{\kappa} + o(c^3_{\kappa}),	& \text{ if } \bar n = n,\, \bar m = m,\\
			  0,				& \text{ otherwise.}
            \end{cases}
\end{align*}
for any sensible $(n,m), \, (\bar n, \bar m) \in E_{(n_0,m_0)}$, recalling the convention of $\binom{n}{2}=0$ for $n\leq 1$. Therefore, we obtain the decomposition
\begin{equation*}
\Pi_{\kappa} = A_{\kappa} + \frac{B_\kappa}{b_{\kappa}} 
\end{equation*}
with $b_{\kappa} =  c^{-3}_{\kappa}$ as defined above and
\begin{equation*}\label{eq:Arates}
{(A_{\kappa})}_{(n,m),(\bar n, \bar m)} = \begin{cases}
                                           \binom{n}{2}c^2_{\kappa},		& \text{ if } \bar n = n - 1,\, \bar m = m,\\
                                           1 -  \binom{n}{2}c^2_{\kappa},	& \text{ if } \bar n = n,\, \bar m = m,\\
                                           0,				& \text{ otherwise,}
                                          \end{cases}\\
\end{equation*}
and 
\begin{equation}\label{eq:Brates}
{(B_{\kappa})}_{(n,m),(\bar n, \bar m)} = \begin{cases}
                                           n + o(1),	& \text{ if } \bar n = n - 1,\, \bar m = m+1,\\
                                           Km + o(1),	& \text{ if } \bar n = n+1,\, \bar m = m-1,\\
                                           - n - Km + o(1),	& \text{ if } \bar n = n,\, \bar m = m,\\
                                           o(1),	& \text{ otherwise.}
                                          \end{cases}
\end{equation}
In order to apply the convergence result, we first need to check condition \eqref{eq:cond2} for our set-up, which reads
\begin{equation}
\label{eq:cond2_spec}
\lim_{C \to \infty}\lim_{\kappa \to \infty} \sup_{r \geq Cc^{-2}_{\kappa}} \Vert({A}_{\kappa})^r - P\Vert = 0
\end{equation}
for $P$ given in  \eqref{eq:Pancient}. Note that $A_{\kappa}$ is a stochastic matrix and denote by $(Z^{\kappa}_r)_{r \in \N_0}$ the Markov chain associated to it.
This is a pure death process in the first component and constant in the second. 
Then, by the definition of matrix norm, we get
\begin{align*}
\Vert (A_{\kappa})^r -P \Vert 	& = \max_{(n,m) \in E_{(n_0, m_0)}} \sum_{(\bar n, \bar m) \in E_{(n_0, m_0)}}  |(A_{\kappa})^r_{(n,m), (\bar n, \bar m)} - P_{(n,m), (\bar n, \bar m)}| \\
				& = \max_{n \geq 1, m \geq 0} \Big( \vert(A_{\kappa})^r_{(n,m), (1,m)} - 1 \vert + \sum_{\bar n = 2}^{n} \vert (A_{\kappa})^r_{(n,m), (\bar n,m)} - 0 \vert\Big) \\
				& = \max_{n \geq 1, m \geq 0} 2\Big( 1-(A_{\kappa})^r_{(n,m), (1,m)}\Big) \\
				& = 2 \max_{n \geq 1, m \geq 0} \P \big\{ Z^{\kappa}_r \neq (1,m) \mid Z^{\kappa}_0 = (n,m)\big\}.
\end{align*}

Observe that for all $n \geq 2$ (and all $m \geq 0$)
\begin{align*}
{A_{\kappa}}_{(n,m), (n-1,m)} = \binom{n}{2}c^{2}_{\kappa} \geq c^{2}_{\kappa}.
\end{align*} 
Hence, since we start from $n_0$ active individuals, the number of time-steps required until full coalescence is dominated by the sum of $n_0-1$ independent geometric random variables $\gamma_1^{\kappa},\dots ,\gamma_{n_0-1}^{\kappa}$ with success probability $ c^{2}_{\kappa}$. By Markov's inequality, we get 
\begin{align*}
\sup_{\kappa \in \N} \P \left\{ \gamma_1^{\kappa} + \dots + \gamma_{n_0-1}^{\kappa} \geq Cc^{-2}_{\kappa} \right\} 	& \leq \frac{c^{2}_{\kappa}}{C} \E \big[\gamma_1^{\kappa} + \dots + \gamma_{n_0-1}^{\kappa} \big]
															 = \frac{(n_0-1)c^{2}_{\kappa}}{Cc^{2}_{\kappa}} = \frac{(n_0-1)}{C} 
\end{align*}
and with it
\begin{align*}
\lim_{C \to \infty}\lim_{\kappa \to \infty} \sup_{r \geq Cc^{-2}_{\kappa}} \Vert({A}_{\kappa})^r - P\Vert \leq \lim_{C \to \infty}\lim_{\kappa \to \infty} \sup_{r \geq Cc^{-2}_{\kappa}} \frac{(n_0-1)}{C}  = 0
\end{align*}
which gives \eqref{eq:cond2_spec}. We are now left to establish the matrix-norm limit \eqref{lemma:twoscales:eq:BNlimit}, that is, show that
\begin{equation}
\lim_{N\to\infty} P {B}_{N} P \qquad \text{exists}
\end{equation}
and coincides with the $G$ in Definition \ref{defn:ancgen}. For this, notice that $B_{\kappa}$ converges for $\kappa \to \infty$ uniformly and in the matrix norm (recalling that the state space $E_{(n_0,m_0)}$ is finite), and define 
\begin{align*}
 B \; := \;\lim_{\kappa \rightarrow \infty} B_{\kappa} \; = \; \begin{cases}
                                                     n,		& \text{ if } \bar n = n-1,\, \bar m = m+1,\\
                                                     Km,	& \text{ if } \bar n = n+1,\, \bar m = m-1,\\
                                                     -n-Km,	& \text{ if } \bar n = n,\, \bar m = m,\\
                                                     0,		& \text{ otherwise.}
                                                    \end{cases}
\end{align*}
Through careful calculations one confirms 
\begin{align}\label{eq:PBP_anc}
G=PBP 
\end{align}
 and thus \eqref{lemma:twoscales:eq:BNlimit}. As described in the previous section, \cite[Lemma 1.7 and Remark 1.8]{BBE13} then yields
\begin{align*} 
\lim_{\kappa\to\infty}\Pi_\kappa^{\lfloor t c^{-3}_{\kappa}\rfloor} = \lim_{\kappa\to\infty}  \left({A}_{\kappa} + c^{3}_{\kappa}{B}_{\kappa}\right)^{\lfloor t c^{-3}_{\kappa}\rfloor} = Pe^{tG} =:\Pi(t) \qquad \text{for all}\;\; t > 0,
\end{align*}
which given $\eta^{\kappa}(0) =(N^{c_{\kappa}}(0), M^{c_{\kappa}}(0)) = (\tilde N(0), \tilde M(0))$ then implies 
\begin{align*}
 (\eta^\kappa(\lfloor c^{-3}_{\kappa}t\rfloor))_{t\geq0} \rightarrow (\tilde N(t),\tilde M(t))_{t\geq0} \quad \text{in finite dimensional distributions, as } \kappa\rightarrow \infty,
\end{align*}
where $(\tilde N(t),\tilde M(t))_{t\geq0} $ is the ancient ancestral lines process defined in Definition \ref{defn:ancgen}.

{\bf Step iii)} Since we have proven the necessary assumptions in Step i) and ii), Lemma \ref{topological} implies
\begin{align*}
\left(N^{c_{\kappa}}(c^{-1}_{\kappa}t), M^{c_{\kappa}}(c^{-1}_{\kappa}t)\right)_{t \geq 0} 	& = \left(\xi^{c_{\kappa}}\left(\frac{c^{-3}_{\kappa}}{c^{-2}_{\kappa}}t\right)\right)_{t \geq 0} \longrightarrow \left(\tilde N(t), \tilde M(t)\right)_{t \geq 0} 
\end{align*}
in finite dimensional distributions for $\kappa \rightarrow \infty$ and the proof is complete.
\end{proof}

\begin{rem}[Imbalanced Island Size]\label{rem:imbalanced_island_size_coal}
It is straightforward to pursue the same considertation for the \emph{two-island model} and its \emph{structured coalescent} \cite{H94, Not90}. The \emph{two-island model} considers two poulations much like the \emph{seed bank model}, but allows for coalescence in the second population. Its genalogy is the given by the the structured coalescent, whose blockcounting process allows for the same transition rates described in \eqref{eq:ratesSBC} adding $r_{(n,m),(n'm')} = \binom{m}{2}$ for $n'=n$ and $m'=m-1$, i.e.\ coalescence in the second island.

Scaling the migration rate $c\rightarrow 0$ while speeding up time by $1/c\rightarrow \infty$ as we previsouly did for the block counting process of the seed bank coalescent will lead to a structure with instantaneous coalescences in \emph{both} islands, leaving us with a single line migrating between them.

Much more interesting in this set-up ist to consider a two-island model with \emph{different} scalings of the coalescence rates in the islands. The parameters $\alpha$ resp. $\alpha'$ are associated with the notion of \emph{effective population size} (cf.\ \cite{W08}) so a different scaling corresponds to a significant difference in population size on the two islands. If, in addition to $c\rightarrow 0$ we assume the coalescence  rate $\alpha'>0$ in the second island to scale as $c$, i.e. $\alpha'/c \rightarrow 1$, the result is a two-island model with instantaneous coalescences in the first island, but otherwise `normal' migration and coalescence behavior in the second. For more precision, denote by $(N^{c,\alpha'}(t), M^{c,\alpha'}(t))_{t \geq 0}$ the block-counting process of the structured coalescent as defined above with migration rate $c>0$ and coalescence rate $\alpha'>0$ in the second island and assume that it starts in  some $(n_0, m_0) \in \N\times\N$, $\P$-a.s.. 

Define $(\hat N(t), \hat M(t))_{t \geq 0}$ to be the  the time-continuous Markov chain with initial value $(\hat N(0), \hat M(0))=(n_0, m_0)$, taking values in the state space $ E_{(n_0,m_0)}:=\lbrace 0, \dots, n_0+m_0 \rbrace^2$, with transition matrix $ \Pi(t):=Pe^{tG}$, for $t>0$ and $\Pi(0)$ equal to the identity on $E$, where $P$ is given by \eqref{eq:Pancient} as before and  $G$ is a matrix of the form 
\begin{align*}
 G_{(n,m),(\bar n,\bar m)} := \begin{cases}
                              Km + \binom{m}{2},	& \text{ if } \bar n = 1,\, n \geq 1,\, \bar m = m-1,\\
                              Km,	& \text{ if } \bar n = 1,\, n = 0,\, \bar m = m-1,\\
                              \binom{m}{2},	& \text{ if } \bar n = 0,\, n = 0,\, \bar m = m-1,\\
                              n,	& \text{ if } \bar n = 0,\, n \geq 1,\, \bar m = m+1,\\
                              -\binom{m}{2} - n - Km,	& \text{ if } \bar n = 1,\, n \geq 1,\, \bar m = m,\\
                              -\binom{m}{2} - Km,	& \text{ if } \bar n = n = 0,\, \bar m = m,\\
                              0,	& \text{ otherwise.}
                             \end{cases}
\end{align*}
Then, for any sequence of migration rates $(c_{\kappa})_{\kappa \in \N}$ and any sequence of coalescence rates $(\alpha'_{\kappa})_{\kappa \in \N}$ with $c_{\kappa} \rightarrow 0$ and $c_{\kappa}/\alpha'_{\kappa} \rightarrow 1$  for $\kappa \rightarrow \infty$ 
\begin{align*}
\left(N^{c_{\kappa}, \alpha'_{\kappa}}\left(\frac{1}{c_{\kappa}} t\right), M^{c_{\kappa}, \alpha'_{\kappa}}\left(\frac{1}{c_{\kappa}} t\right)\right)_{t \geq 0} \xrightarrow{\text{f.d.d.}} \big(\hat N(t), \hat M(t)\big)_{t \geq 0}, 
\end{align*}
 in finite dimensional distributions as $\kappa \rightarrow  \infty$.
The proof is analogous to that of Theorem \ref{thm:newcoal}. Considering, again, the sequences $a_{\kappa} := c^{-2}_{\kappa}$ and $b_{\kappa} := c^{-3}_{\kappa}$, $A_{\kappa}$ and $P$ coincide with those in the proof of Theorem \ref{thm:newcoal}, the hardest work has already been done and we ommitt the details of the proof here.  
\end{rem}

\section{Scaling limits for the diffusion}\label{sec:limitsdiff}

We would now also like to observe similar scaling limits for the diffusion \eqref{eq:system}. As we saw in the case of genealogies, rescaling time may lead to a limiting process that is still Markovian, but whose semi-group is not standard. We can, however, use \emph{moment-duality} to obtain this limit.

\subsection{Convergence in finite dimensional distributions from duality}\label{subsec:general_diff}
 We here present a general result on how to obtain convergence in finite dimensional distributions from moment duality and the analogous convergence of the dual process. This result is independent of whether time is rescaled, too, or not. It is, however, of particular interest in that case, since it might lead to limiting objects, that are rather ``ill-behaved'' and we will see examples in Section \ref{subsec:ancient_material_diff} where the limit does not have a generator, hence more standard ways of proving convergence through generator convergence fail.

For any vectors $ n:=(n_1, \ldots, n_d) \in \N_0^d$ and $ x:=(x_1, \ldots, x_d) \in [0,1]^d$, define the \emph{mixed-moment function $\mathfrak m$} as $\mathfrak m( x,  n) := x_1^{n_1}\cdots x_d^{n_d}$. 
\begin{thm}\label{thm:fdd_diff}
 Let $(\zeta_{\kappa}(t))_{t\geq 0}$, $\kappa \in \N_0$, be a sequence of Feller Markov processes taking values in $[0,1]^d$ (for some $d \in \N$) and $(\xi^{\kappa}(t))_{t\geq 0}$, $\kappa \in \N_0$, a sequence of Markov chains with values in $\N_0^d$ such that they are pairwise moment duals, i.e.
 \begin{align*}
  \forall \kappa \in \N_0\;, \forall t\geq 0\; \forall x \in [0,1]^d, n \in \N_0^d:\; \E_n[\mathfrak m(x,\xi^{\kappa}(t))]]=\E^x[\mathfrak m(\zeta_{\kappa}(t),n)].
 \end{align*}
As usual, $\P_n$ and $\P^x$ denote the distributions for which $\xi$, resp. $\zeta$, start in $n$, resp. $x$.

If $(\xi^{\kappa})_{\kappa \in \N_0}$ converges to some Markov chain $\xi$ \emph{in finite dimensional distributions}, then there exists a Markov process $\zeta$ with values in $[0,1]^d$ such that it is the limit in finite dimensional distributions of $(\zeta_{\kappa})_{\kappa \in \N_0}$ and the moment dual to $\xi$, i.e.
\begin{align}\label{eq:duality_diff_lim}
 \forall t\geq 0\; \forall x \in [0,1]^d, n \in \N_0^d:\; \E_n[\mathfrak m(x,\xi(t))]]=\E^x[\mathfrak m(\zeta(t),n)].
\end{align}

\end{thm}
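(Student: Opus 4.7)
The plan is to use the moment duality \emph{for each} $\kappa$ to translate the assumed f.d.d.\ convergence of the dual chains $\xi^\kappa$ into convergence of all mixed moments of the Feller processes $\zeta_\kappa$, and then invoke the Stone--Weierstrass theorem on the compact state space $[0,1]^d$ to identify the f.d.d.\ of the limit; this is the strategy summarised by the commutative diagram in Figure~\ref{fig:comdiag_convergence}. For the one-time marginals, duality gives $\E^x[\mathfrak m(\zeta_\kappa(t), n)] = \E_n[\mathfrak m(x, \xi^\kappa(t))]$, and since $\mathfrak m(x, \cdot)$ is bounded by $1$ and continuous on the discrete space $\N_0^d$, the right-hand side converges by the Portmanteau theorem to $\E_n[\mathfrak m(x, \xi(t))]$ as $\kappa \to \infty$. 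Since polynomials are dense in $C([0,1]^d)$, these mixed moments uniquely determine a probability kernel $P_t(x,\cdot)$ on $[0,1]^d$, which automatically satisfies the duality relation \eqref{eq:duality_diff_lim} at time $t$.

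For $k \ge 2$ time points $0 \le t_1 < \cdots < t_k$ and multi-indices $n^{(1)}, \ldots, n^{(k)} \in \N_0^d$, I would iterate the duality backwards in time. Conditioning on $\mathcal F_{t_{k-1}}$ and using the Markov property of $\zeta_\kappa$, followed by duality applied to the increment $t_k - t_{k-1}$ (with an independent copy of $\xi^\kappa$), replaces $\mathfrak m(\zeta_\kappa(t_k), n^{(k)})$ by $\sum_{n'} \P_{n^{(k)}}[\xi^\kappa(t_k - t_{k-1}) = n'] \, \mathfrak m(\zeta_\kappa(t_{k-1}), n')$. The identity $\mathfrak m(y, n)\mathfrak m(y, n') = \mathfrak m(y, n + n')$ then absorbs this factor into the $(k-1)$-th one with modified (random) exponent $n^{(k-1)} + n'$. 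Repeating the procedure at $t_{k-1}, \ldots, t_2$ and dualising once more at $t_1$ produces an expression of the form
\begin{align*}
\E^x&\Big[\prod_{j=1}^k \mathfrak m(\zeta_\kappa(t_j), n^{(j)})\Big] \\
&= \sum_{n'^{(2)}, \ldots, n'^{(k)} \in \N_0^d} w_\kappa\big(n'^{(2)}, \ldots, n'^{(k)}\big) \cdot \E_{n^{(1)} + n'^{(2)}}\big[\mathfrak m(x, \xi^\kappa(t_1))\big],
\end{align*}
where the weights $w_\kappa$ are a nested product of pmfs of the form $\P_{n^{(j)} + n'^{(j+1)}}[\xi^\kappa(t_j - t_{j-1}) = n'^{(j)}]$ (with the convention $n'^{(k+1)} := 0$) and constitute an honest probability distribution on $(\N_0^d)^{k-1}$. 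Every summand lies in $[0,1]$ and every pmf factor converges pointwise as $\kappa \to \infty$ by f.d.d.\ convergence of $\xi^\kappa$; Scheff\'e's theorem (pointwise convergence of probability mass functions upgrades to total-variation convergence) combined with bounded convergence on the innermost expectation then lets me pass to the limit inside the sum, obtaining the analogous identity with $\xi$ in place of $\xi^\kappa$.

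These limiting multi-time mixed moments are Kolmogorov-consistent (setting $n^{(j)} = 0$ collapses the $j$-th factor to $1$) and, again by Stone--Weierstrass on $[0,1]^{kd}$, uniquely determine probability measures $\mu^x_{t_1, \ldots, t_k}$. Kolmogorov's extension theorem then produces the desired process $(\zeta(t))_{t\ge 0}$ on $[0,1]^d$, and moment convergence on the compact state space upgrades to $\zeta_\kappa \xrightarrow{\text{f.d.d.}} \zeta$ under every $\P^x$. The Markov property of $\zeta$ --- and hence that the family $(P_t)_{t \ge 0}$ forms a (possibly non-standard) semigroup in the spirit of Section~\ref{subsec:Strategy_of_proof} --- can be read off from the mixed-moment Chapman--Kolmogorov identity by dualising, applying the Markov property of the limit chain $\xi$, and dualising back. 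The main obstacle I anticipate is the clean justification of exchanging limit and countable sum in the iterated identity above: a per-summand bound of $1$ is insufficient on its own, and the right tool is Scheff\'e's theorem applied to the nested pmfs, combined with boundedness in $[0,1]$ of the innermost expectation factor.
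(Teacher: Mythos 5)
Your proposal is correct, and while the one--time--marginal step is essentially the paper's (duality turns f.d.d.\ convergence of $\xi^\kappa$ into convergence of all mixed moments of $\zeta_\kappa(t)$, and density of polynomials in $C([0,1]^d)$ identifies the limit law), your treatment of the multi-time distributions takes a genuinely different route. The paper writes the multi-time mixed moment as a nested integral against the transition kernels $P_\kappa(\cdot,t,\dd\bar x)$ of $\zeta_\kappa$ and passes to the limit integral by integral, using the Feller property so that each inner integral converges to a continuous bounded function that can then be paired with the weakly converging kernel one level further out. You instead dualise repeatedly, pushing the entire $k$-point moment onto the dual side as a sum over paths of $\xi^\kappa$ weighted by nested transition pmfs, and pass to the limit there via pointwise convergence of the pmfs upgraded to total-variation convergence by Scheff\'e, plus boundedness of the innermost factor in $[0,1]$; this is a clean and correct way to handle the limit--sum interchange, and it has the side benefit of not really using the Feller hypothesis on $\zeta_\kappa$ (only its Markov property), at the price of requiring the f.d.d.\ convergence of $\xi^\kappa$ from \emph{every} starting point and conservativeness of the limit chain --- both of which the paper also implicitly assumes, since the duality relation \eqref{eq:duality_diff_lim} quantifies over all $n$. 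The only point I would sharpen is the existence (as opposed to uniqueness) of the limiting measures: Stone--Weierstrass alone only gives that the mixed moments \emph{determine} a measure, and you should say explicitly that existence comes either from tightness on the compact space $([0,1]^d)^k$ together with a Riesz/Prokhorov argument, or, as the paper does, from the solvability of the Hausdorff moment problem on $[0,1]^d$; the consistency check and Kolmogorov extension step you describe then match the paper's conclusion.
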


\begin{rem}
 At a first glance one might suspect that this result should also hold in a more general set-up as long as the duality function used yields convergence determining families for the respective semi-groups. Indeed, most of the steps of the proof would still go through. However, note that we did not assume existence of a Markovian limit beforehand. For this we use the solvability of Hausdorff's moment problem on $[0,1]^d$ \cite{MultDimMomProb}, which is a match to the moment duality function in our theorem.
\end{rem}

\begin{proof}
 The proof can roughly be split into three steps: We first use duality to prove the convergence of the \emph{one-dimensional} distributions of $(\zeta_{\kappa})_{\kappa\in \N_0}$. This, together with the Markov property will give us the convergence of the \emph{finite dimensional distributions} of $(\zeta_{\kappa})_{\kappa \in \N_0}$ to a family of limiting distributions. Then we prove \emph{consistency} of the respective limiting measures and hence by Kolmogorov's Extension-Theorem the existence of a limiting \emph{process} $\zeta$, which must then be Markovian.
 
 Since the mixed-moment function $\mathfrak m$ is continuous and bounded as a function on $\N_0^d$, the convergence of the finite dimensional distributions of $(\xi^{\kappa})_{\kappa \in \N}$ and the assumed moment duality yield
 \begin{align}\label{eq:dualityconv1}
  \E^x[\mathfrak m(\zeta_{\kappa}(t),n)] = \E_n[\mathfrak m(x,\xi_{\kappa}(t))] \xrightarrow{\kappa \rightarrow \infty} \E_n[\mathfrak m(x,\xi(t))] :=\gamma(n,x,t)
 \end{align}
 for any $t \geq 0$, $x \in [0,1]^d$ and $n \in \N_0^d$. The unique solvability of the Hausdorff moment problem on $[0,1]^d$ \cite{MultDimMomProb} then gives the existence of a distribution  $\mu^{x,t}$ on $([0,1], \mathfrak B([0,1]))$ (where $\mathfrak B$ is the Borel-$\sigma$-algebra) such that $\gamma(n,x,t) = \int_{[0,1]^d} \mathfrak m(\bar x,n)\dd \mu^{x,t}(\bar x)$. Since the polynomials are dense in the continuous functions, \eqref{eq:dualityconv1} implies the convergence of the one-dimensional distributions to $(\mu^{x,t})_{t\geq 0}$ (for each starting point $x \in [0,1]^d$).

 To check the convergence in finite dimensional distributions, let $P_{\kappa}$ be the probability transition function of $\zeta_{\kappa}$ and recall that we assumed them to be Feller. For $0 \leq t_1 < \ldots < t_l <\infty$, $x \in [0,1]^d$ and $n_1, \ldots, n_l \in [0,1]^d$ then observe
 \begin{align}\label{eq:convfdd}
  \E^x[&\mathfrak m(\zeta_{\kappa}(t_1),n_1)\cdots\mathfrak m(\zeta_{\kappa}(t_l),n_l)] \notag\\
  & = \int_{[0,1]}\int_{[0,1]}\cdots\int_{[0,1]}\mathfrak m(\bar x_1, n_1) \cdots \mathfrak m(\bar x_l,n_l) P_{\kappa}(\bar x_{l-1}, t_l-t_{l-1}, \dd \bar x_l)\cdots P_{\kappa}(x, t_1, \dd \bar x_1)\notag\\
  & \xrightarrow{ \kappa \rightarrow \infty}: \gamma(n_1, \ldots, n_l,x,t_1, \ldots, t_l). 
 \end{align}
 The convergence (to some constant $\gamma(n_1, \ldots, n_l,x,t_1, \ldots, t_l)$) follows from the convergence of the one-dimensional distributions if one observes using the Lebesgue convergence theorem that weak convergence also implies the convergence of integrals when the integrand itself converges to a continuous and bounded function. Since our processes are Feller, we can iterate this argument and obtain the convergence above.
 
 Again, by the unique solvability of the Hausdorff moment problem \cite{MultDimMomProb} we thus obtain the existence of a measure $\mu^{I,x}$ on $([0,1]^{I}, \mathfrak B([0,1])^{\otimes I})$for any finite set of indices $I=\{t_1, \ldots, t_l\} \subset [0,\infty)$ and starting point $x\in [0,1]^d$ and \eqref{eq:convfdd} implies the convergence of the finite-dimensional distributions of $(\zeta_{\kappa})_{\kappa \in N}$ to a respective $\mu^{I,x}$. Since these $\mu^{I,x}$ are the limits of a consistent family they are themselves consistent and with Kolmogorov's Extension-Theorem there exists a unique measure $\mu^x$ on the product-space $([0,1]^{[0,\infty)}, \mathfrak B([0,1])^{\otimes [0,\infty)})$ which is the distribution of the desired process $\zeta$. Its Markovianity follows from the respective property of the $(\zeta_{\kappa})_{\kappa \in \N_0}$
 
 The duality of $\zeta$ and $\xi$ follows from the duality of the prelimiting processes.  
\end{proof}

\subsection{Ancient ancestral material}\label{subsec:ancient_material_diff}

As an application of Theorem \ref{thm:fdd_diff} we considered the diffusion \eqref{eq:system} with the scaling regime of Section \ref{subsec:ancestral_material_coal}, namely, when the migration rate $c \to 0$ while simultaneously speeding up time by a factor $1/c \to \infty$ and obtained Theorem \ref{thm:anc_diff} in Section \ref{sec:intro} stating the convergence of the rescaled diffusions to a Markovian limit $(\tilde X(t), \tilde Y(t))_{t \geq 0}$.

\begin{proof}[Proof of Theorem \ref{thm:anc_diff}]
 Since the moment duality of the block-counting process of the seed bank coalescent and the seed bank diffusion \cite[Thm.\ 2.8]{BGCKW16} holds for every time $t\geq 0$, it is preserved for the time-changed processes $\left(N^{c_{\kappa}}\left(t/c_{\kappa}\right), M^{c_{\kappa}} \left(t/c_{\kappa}\right)\right)_{t \geq 0}$ and $\left(X^{c_{\kappa}}\left(t/c_{\kappa}\right), Y^{c_{\kappa}} \left(t/c_{\kappa}\right)\right)_{t \geq 0}$. Together with Theorem \ref{thm:newcoal} all assumptions of Theorem \ref{thm:fdd_diff} hold and we get the existence of a Markov process $(\tilde X(t),\tilde Y(t))_{t\geq 0}$ that is the dual of $(\tilde N(t),\tilde M(t))_{t\geq 0}$. The uniqueness of the moment dual of a Markov process proves that the limit does not depend on the choice of scaling sequence $(c_{\kappa})_{\kappa \in \N_0}$. 
\end{proof}

Moment duality now allows us to translate our knowledge about the ancient ancestral line process $(\tilde N(t), \tilde M(t))_{t \geq 0}$ (Definition \ref{defn:ancgen}) to $(\tilde X(t), \tilde Y(t))_{t \geq 0}$. More precisely, since \eqref{eq:duality_diff_lim} holds in particular for $t >0 $, $m=0$ and any $n\geq 1$, $x,y \in [0,1]$ we see
 \begin{align*}
 \E^{x,y}[\tilde X(t)^n] 	& =  \E_{n,0}[x^{\tilde N(t)}y^{\tilde M(t)}] \\
				& = x\P_{n,0}(\tilde N(t) = 1, \tilde M(t) = 0) + y\P_{n,0}(\tilde N(t) = 0, \tilde M(t) = 1) \\
				& = x (Pe^{tG})_{(n,0),(1,0)}+y (Pe^{tG})_{(n,0),(0,1)} = x (e^{tG})_{(1,0),(1,0)} + y(e^{tG})_{(1,0),(0,1)}.
 \end{align*}
We used the fact, that the first component of the ancient ancestral lines process immediately takes values in $\{0,1\}$ in the second equality and the definition of the projection in the last equality. Since the right-hand-side does not depend on $n \geq 1$, we can conclude that 
\begin{align}\label{eq:obs_anc_X}
\tilde X(t) \in \{0,1\}\quad \P^{x,y}\text{-a.s.\ for any } t>0 \text{ and any }(x,y)\in [0,1]^2. 
\end{align}

This small observation has an important consequence: Much like in the case of its dual $(\tilde N(t), \tilde M(t))_{t \geq 0}$, the transition function of the ancient ancestral material process $(\tilde X(t), \tilde Y(t))_{t \geq 0}$ is not right-continuous in 0 and therefore $(\tilde X(t), \tilde Y(t))_{t \geq 0}$ does not have a classical generator.
Intuitively the reproduction mechanism (in the active population) acts so fast, that fixation (or extinction) \emph{in the active population} happens instantaneously. Whenever there is an invasion from the seed bank of the type extinct in the active population, its chances of fixating are proportional to the fraction of the type in the \emph{dormant} population. The limit is therefore a pure jump process in the active component that moves between the states $0$ and $1$, while the seed bank component retains its classical behavior. We can formalize this observation if we restrict the process to the smaller state space $\{0,1\}\times[0,1]$, see Proposition \ref{prop:generator_restricted} below.

\begin{defn}\label{defn:anc_restr}
 Let $(\bar N(t), \bar M(t))_{t \geq 0}$ be the Markov chain on $\{0,1\}\times\N_0$ given by the (conservative) $Q$-matrix
\begin{align*}
 \bar G_{(n,m),(\bar n,\bar m)} = \begin{cases}
                               Km,	& \text{ if } \bar n = 1,\, n \in \{0,1\},\, \bar m = m-1,\\
                              1,	& \text{ if } \bar n = 0,\, n = 1,\, \bar m = m+1,\\
                              -n-Km,	& \text{ if } \bar n = n ,\, \bar m = m,\\
                              0,	& \text{ otherwise.}
                             \end{cases}
\end{align*}
for any $(n,m),(\bar n,\bar m) \in \{0,1\}\times\N_0$. 

On the other hand, let $(\bar X(t), \bar Y(t))_{t \geq 0}$ be the Markov process on $\{0,1\}\times [0,1]$ with generator
\begin{align*}
 \bar A f(x,y) 	&= y(f(1,y)-f(0,y))\1_{\{0\}}(x) + (1-y)(f(0,y)-f(1,y))\1_{\{1\}}(x)\notag \\
			& \qquad + K(x-y)\frac{\partial f}{\partial y} (x,y).
\end{align*}
\end{defn}

Note that $\bar G$ given above is the restriction of $G$, the ``generator'' of $(\tilde N(t),\tilde M(t))_{t\geq 0}$ from Definition \ref{defn:ancgen}. Indeed, these processes are essentially the ancestral material processes when started in the smaller state-space:

\begin{prop}\label{prop:generator_restricted}
The processes $(\bar N(t), \bar M(t))_{t \geq 0}$ and $(\bar X(t), \bar Y(t))_{t \geq 0}$ from Definition \ref{defn:anc_restr} are moment duals, i.e.\ 
\begin{align}\label{eq:duality_anc_restr}
 \forall t\geq 0\; \forall (x,y) \in [0,1]^2, (n,m) \in \N_0^2:\; \E_{n,m}\left[x^{\bar N(t)}y^{\bar M(t)}\right]=\E^{x,y}\left[n^{\bar X(t)}m^{\bar M(t)}\right].
\end{align}
$(\bar N(t), \bar M(t))_{t \geq 0}$ coincides in distribution with $(\tilde N(t), \tilde M(t))_{t \geq 0}$ if (both are) started in the reduced state-space $\{0,1\}\times\N_0$. 

Likewise, $(\bar X(t), \bar Y(t))_{t \geq 0}$ coincides in distribution with $(\tilde X(t), \tilde Y(t))_{t \geq 0}$ if (both are) started in the reduced state-space $\{0,1\}\times[0,1]$. 
\end{prop}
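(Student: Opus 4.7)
The plan is to establish the three assertions in sequence: (1) verify the moment duality \eqref{eq:duality_anc_restr}; (2) show that $(\bar N,\bar M)$ and $(\tilde N,\tilde M)$ coincide in law on the restricted state space; and (3) deduce the coincidence of $(\bar X,\bar Y)$ with $(\tilde X,\tilde Y)$ from (1), (2) and uniqueness of moment duals.

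For (1), I would invoke the standard generator-duality argument used e.g.\ in the proof of \cite[Thm.~2.8]{BGCKW16}: it suffices to check that, for the duality function $h(x,y,n,m) := x^n y^m$ with $x\in\{0,1\}$, $y\in[0,1]$ and $(n,m)\in\{0,1\}\times\N_0$, one has
\begin{equation*}
\bar A\, h(\cdot,\cdot,n,m)(x,y) \;=\; \bar G\, h(x,y,\cdot,\cdot)(n,m).
\end{equation*}
This is a direct but slightly delicate computation. On the $\bar A$-side, the drift $K(x-y)\,\partial_y h = K(x-y)\,m\,x^n y^{m-1}$ matches the resuscitation rate $Km$ of $\bar G$ once one accounts for the identity $0^n = \1_{\{n=0\}}$, while the pure jump part $y[h(1,y)-h(0,y)]\1_{\{0\}}(x) + (1-y)[h(0,y)-h(1,y)]\1_{\{1\}}(x)$ matches the dormancy rate $1$ of $\bar G$ from $(1,m)$ to $(0,m+1)$. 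Since $\bar G$ is bounded (it is restricted to $\{0,1\}\times\N_0$ and the rates are linear in $m$, which stays bounded by $n_0+m_0$ along any trajectory) and the jump part of $\bar A$ is bounded with a smooth drift, the standard Gronwall-type passage from generator duality to the process-level identity then delivers \eqref{eq:duality_anc_restr}.

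For (2), I would compare the rates of $\bar G$ with those of $G$ from Definition \ref{defn:ancgen}: a direct inspection shows that they agree on every state $(n,m)\in\{0,1\}\times\N_0$, and that $G$ produces no transitions out of this subspace from such a state. Moreover, the projection $P$ from \eqref{eq:Pancient} acts as the identity on $\{0,1\}\times\N_0$. Therefore, for any $(n_0,m_0)$ in this subspace and every $t\geq 0$,
\begin{equation*}
\Pi(t)\bigl((n_0,m_0),\cdot\bigr) = (Pe^{tG})\bigl((n_0,m_0),\cdot\bigr) = e^{t\bar G}\bigl((n_0,m_0),\cdot\bigr),
\end{equation*}
and the Markov property upgrades this to equality of the finite-dimensional distributions of $(\tilde N,\tilde M)$ and $(\bar N,\bar M)$.

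Finally, for (3), uniqueness of moment duals closes the argument. By Theorem \ref{thm:anc_diff}, $(\tilde X,\tilde Y)$ is the moment dual of $(\tilde N,\tilde M)$; by step (2), $(\tilde N,\tilde M)$ and $(\bar N,\bar M)$ have the same law when started in $\{0,1\}\times\N_0$; and by step (1), $(\bar X,\bar Y)$ is the moment dual of $(\bar N,\bar M)$. Chaining these identities gives, for every $(x,y)\in\{0,1\}\times[0,1]$, $(n,m)\in\{0,1\}\times\N_0$ and $t\geq 0$,
\begin{equation*}
\E^{x,y}\bigl[\bar X(t)^n\bar Y(t)^m\bigr] = \E_{n,m}\bigl[x^{\bar N(t)}y^{\bar M(t)}\bigr] = \E_{n,m}\bigl[x^{\tilde N(t)}y^{\tilde M(t)}\bigr] = \E^{x,y}\bigl[\tilde X(t)^n\tilde Y(t)^m\bigr],
\end{equation*}
and the unique solvability of the Hausdorff moment problem on $[0,1]^2$ identifies the one-dimensional distributions; the Markov property then extends this to coincidence in finite-dimensional distributions. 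The main obstacle I expect is step (1): since $\bar X$ is forced to live in $\{0,1\}$, the Wright--Fisher diffusion coefficient has been replaced by pure jump terms, and the generator-duality verification must carefully bookkeep how the indicator factors $\1_{\{0\}}(x),\1_{\{1\}}(x)$ interact with the algebraic identity $0^n = \1_{\{n=0\}}$ at the boundary point $x=0$.
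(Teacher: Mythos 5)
Your proposal is correct and follows essentially the same route as the paper: a generator computation for the duality of $(\bar N,\bar M)$ and $(\bar X,\bar Y)$, agreement of $\bar G$ with $G=PBP$ on $\{0,1\}\times\N_0$ (where $P$ acts as the identity) for the coincidence of the coalescent-side processes, and a chain of dualities plus moment identification for the diffusion side. One point in step (3) deserves more care: the chained moment identity is only available for $(n,m)\in\{0,1\}\times\N_0$ (since the middle equality relies on the coincidence of $(\tilde N,\tilde M)$ and $(\bar N,\bar M)$ on the reduced state space), so you cannot directly invoke the Hausdorff moment problem on $[0,1]^2$, which requires all mixed moments. You must first use that both $\bar X(t)$ (by definition) and $\tilde X(t)$ (by the paper's observation \eqref{eq:obs_anc_X}, itself derived from the duality and the form of $P$) take values in $\{0,1\}$ almost surely for $t>0$; a law on $\{0,1\}\times[0,1]$ is then determined by its moments of order $(n,m)\in\{0,1\}\times\N_0$, and the Markov property upgrades the one-dimensional identification to finite-dimensional distributions exactly as you say.
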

Moment duality of the involved processes will be important for the proof of the last statement.

\begin{figure}\label{fig:mutcoalescent}

\center

\includegraphics[width=.6\textwidth]{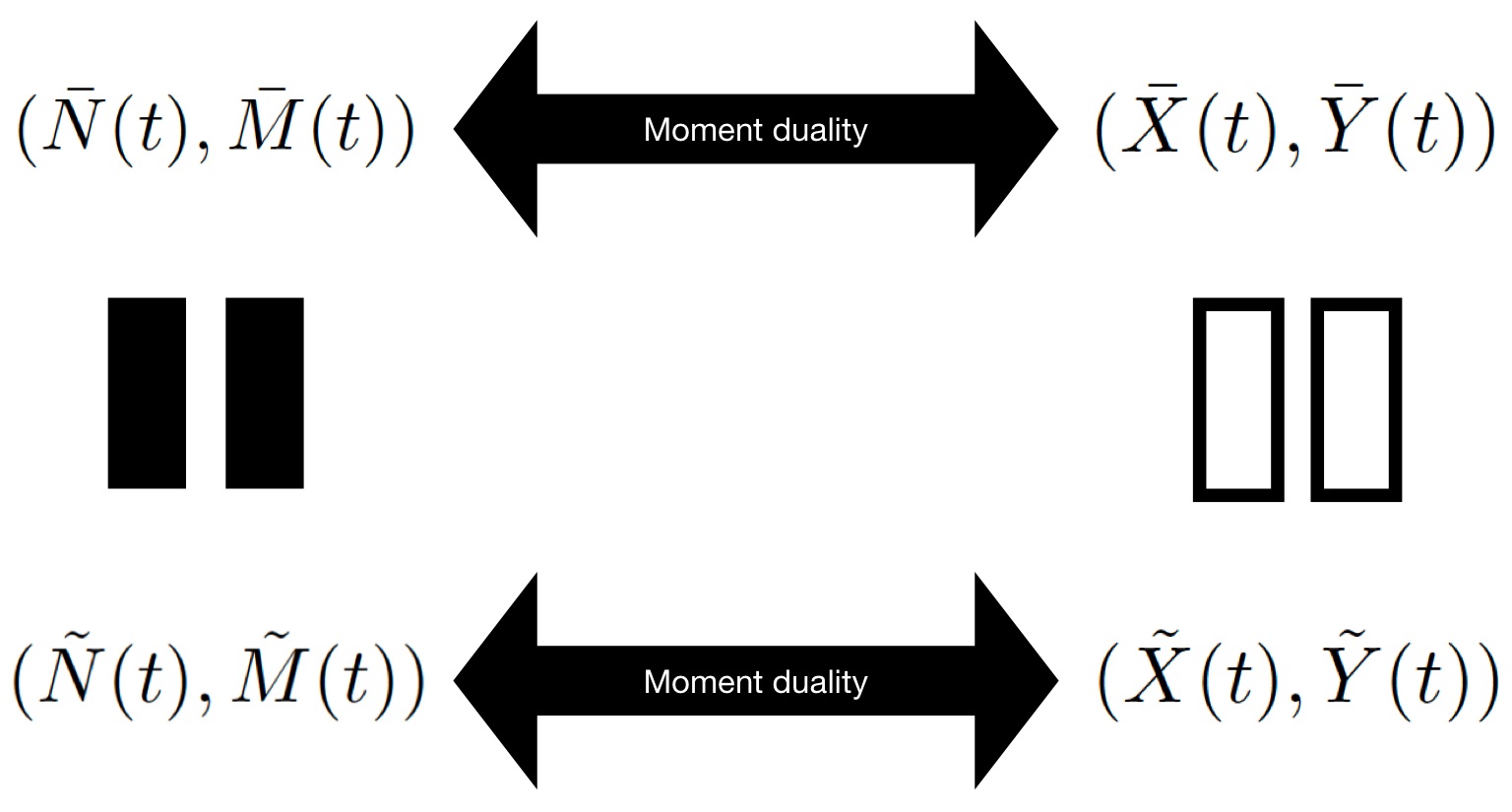}

\caption{Strategy of the proof of Proposition \ref{prop:generator_restricted}. The moment duality of $(\tilde N(t), \tilde M(t))_{t \geq 0}$ and $(\tilde X(t), \tilde Y(t))_{t \geq 0}$ is consequence of Theorem \ref{thm:anc_diff}. The semigroups of $(\tilde N(t), \tilde M(t))_{t \geq 0}$ and $(\bar N(t), \bar M(t))_{t \geq 0}$ when started in the reduced state-space $\{0,1\}\times\N_0$. We prove the moment duality of $(\bar N(t), \bar M(t))_{t \geq 0}$ and $(\bar X(t), \bar Y(t))_{t \geq 0}$, which allows us to conclude, that the semigroups of $(\tilde X(t), \tilde Y(t))_{t \geq 0}$ and $(\bar X(t), \bar Y(t))_{t \geq 0}$ also agree when started in $\{0,1\}\times[0,1]$.}

\end{figure}

\begin{proof}
The duality of $(\bar N(t),\bar M(t))_{t\geq 0}$ and $(\bar X(t),\bar Y(t))_{t\geq 0}$ can be proven through the standard method of generator calculations: Applying $\bar A$ to $S((x,y),(n,m)):=x^ny^m$ for $(n,m) \in \{0,1\}\times\N_0$ and $(x,y) \in \{0,1\}\times [0,1]$, as a function in $(x,y)$ yields
\begin{align*}
 \bar A S ((x,y),(n,m)) 	& = y(y^m-0^ny^m)\1_{\{0\}}(x) + (1-y)(0^ny^m-y^m)\1_{\{1\}}(x)\\
				& \qquad + K(x-y)x^nmy^{m-1}\\
				& = -(x-y)(y^m-0^ny^m)\1_{\{0\}}(x) + (x-y)(0^ny^m-y^m)\1_{\{1\}}(x)\\ 
				& \qquad + K(x-y)x^nmy^{m-1}\\
				& = -n(x-y)y^m + Km(x-y)x^ny^{m-1}\\
				& = Kmx^{n+1}y^{m-1} + (-Kmx^n - nx)y^m + ny^{m+1}
\end{align*}
where we continue to use $0^0=1$, the fact that $n \in \{0,1\}$ and simply sorted the terms by powers of $y$ for easier comparison in the last line.

On the other hand, if we apply $\bar G$ to $S$ as a function in $(n,m)\in\{0,1\}\times N_0$, we get
\begin{align*}
 \bar G S ((x,y),(n,m))	& = Km(xy^{m-1}-x^ny^m) + 1(y^{m+1}-xy^m)\1_{\{1\}}(n) \\
			& = Km(xy^{m-1}-xy^m)\underbrace{\1_{\{1\}}(n)}_{=n} + Km(xy^{m-1}-y^m)\underbrace{\1_{\{0\}}(n)}_{=1-n} \\
			& \qquad + 1(y^{m+1}-xy^m)\underbrace{\1_{\{1\}}(n)}_{=n}\\
			& = Kmxy^{m-1} + (-Kmnx -Km(1-n)-nx)y^m + ny^{m+1}.
\end{align*}
A close look noting that for our choices of variables we have $x^{n+1}=x$ and $nx+(1-n) = x^n$ shows that the two coincide. Since the duality function is bounded and continuous we conclude the proof with 

\cite[Prop.\ 1.2]{JK14}. 

Let $\bar f:\{0,1\}\times\N_0 \rightarrow \R$ and define $f$ as $f(n,m):=\bar f(n,m)$ for $(n,m) \in \{0,1\}\times \N_0$ and 0 otherwise. Recall that $Pe^{tG}$ is the semi-group of $(\tilde N(t),\tilde M(t))_{t\geq 0}$ from Definition \ref{defn:ancgen}. Since $G_{(n,m),(\bar n, \bar m)}=\bar G_{(n,m),(\bar n, \bar m)}$ for any $(n,m) \in \{0,1\}\times \N_0$, we have
\begin{align*}
 G f (n,m) = \bar G \bar f (n,m), \qquad (n,m) \in \{0,1\}\times \N_0.
\end{align*}
As we also know that $G = PBP$ from \eqref{eq:PBP_anc}, it follows that 
\begin{align*}
 G f (n,m) = PBP f (n,m) \in \{0,1\}\times \N_0
\end{align*}
whence the semi-groups of $(\tilde N(t),\tilde M(t))_{t\geq 0}$ and $(\bar N(t),\bar M(t))_{t\geq 0}$ coincide on such $f$ and the two processes are equal in distribution when started in $(n,m) \in \{0,1\}\times \N_0$, as claimed.

This also implies 
\begin{align} \label{eq:anc_diff_equals_restr}
  \E^{x,y}\left[n^{\tilde X(t)}m^{\tilde M(t)}\right]=\E_{n,m}\left[x^{\tilde N(t)}y^{\tilde M(t)}\right]=\E_{n,m}\left[x^{\bar N(t)}y^{\bar M(t)}\right]=\E^{x,y}\left[n^{\bar X(t)}m^{\bar M(t)}\right]
\end{align}
for all $t\geq 0$ and all $(x,y) \in \{0,1\}\times[0,1]$ and $(n,m) \in \{0,1\}\times\N_0$, where we used the dualities from Theorem \ref{thm:anc_diff} and Proposition \ref{prop:generator_restricted} in the first, respectively last equality. 

Recall from \eqref{eq:obs_anc_X}, that for any $t>0$ we have $(\tilde X(t),\tilde Y(t)) \in \{0,1\}\times[0,1]$, $\P^{x,y}$-a.s., $(x,y) \in [0,1]^2$. Since a distribution on $\{0,1\}\times[0,1]$ is uniquely determined by its moments of order $(n,m)\in \{0,1\}\times \N_0$, \eqref{eq:anc_diff_equals_restr} implies that $(\tilde X(t),\tilde Y(t)) \sim (\bar X(t),\bar Y(t))$ for any $t>0$ (when started in the same $(x,y) \in \{0,1\}\times[0,1]$). Since they are both Markovian, this implies that the distributions of $(\bar X(t), \bar Y(t))_{t \geq 0}$ and $(\tilde X(t), \tilde Y(t))_{t \geq 0}$ coincide when started in the reduced state-space $\{0,1\}\times[0,1]$. 
\end{proof}

\begin{rem}[Imbalanced Island size - Part 2]\label{rem:imbalanced_island_size_diff}
 We return to the example discussed in Remark \ref{rem:imbalanced_island_size_coal} of the two-island model and its close relation to the seed bank model.
 The frequency process of allele $a$ is then described by the \emph{two-island diffusion} \cite{KZH08}
\begin{align}
\label{eq:twoisland_diff}
\text{d} X(t) & = (Y(t) -X(t))\text{d}t + \alpha\sqrt{X(t)(1-X(t))}\text{d}B_t, \notag \\[.1cm]
\text{d} Y(t) & = K(X(t) -Y(t))\text{d}t + \alpha'\sqrt{X(t)(1-X(t))}\text{d}B'_t,
\end{align}
where $B$ and $B'$ are independent Brownian Motions.

Again, the interesting consideration here is to use \emph{different} scalings of the coalescence rates in the islands, i.e.\ different scalings for $\alpha$ and $\alpha'$. The parameters $\alpha$ resp. $\alpha'$ are associated with the notion of \emph{effective population size} (cf.\ \cite{W08}) so a different scaling corresponds to a significant difference in population size on the two islands. If, in addition to $c\rightarrow 0$ we assume the coalescence  rate $\alpha'>0$ in the second island to scale as $c$, i.e. $\alpha'/c \rightarrow 1$, the result is a two-island model with jumps in the first island, but otherwise `normal' migration and diffusive behavior in the second. For more precision, denote by $(X^{c,\alpha'}(t), Y^{c,\alpha'}(t))_{t \geq 0}$ the \emph{two-island diffusion} with migration rate $c>0$ and island 2 of size $\alpha'>0$ and assume that it starts in  some $(x, y) \in [0,1]^2$, $\P$-a.s.. As we did for the seed bank model, it can be shown that the sequence $(X^{c_{\kappa},\alpha'_{\kappa}}(t), Y^{c_{\kappa},\alpha'_{\kappa}}(t))_{t \geq 0}$ will convergen to a Markovian degenerate limit coinciding in distribution with a Markov process with generator
\begin{align*}
  \bar A f(x,y) 	&= y(f(1,y)-f(0,y))\1_{\{0\}}(x) + (1-y)(f(0,y)-f(1,y))\1_{\{1\}}(x)\notag \\
			& \qquad + K(x-y)\frac{\partial f}{\partial y} (x,y) + \frac{1}{2}x(1-x)\frac{\partial^2}{\partial x^2} f(x,y)
\end{align*}
whenever started in the smaller state-space $\{0,1\}\times[0,1]$.

\end{rem}

{\bf Acknowledgement.} JB and MWB were supported by DFG Priority Programme 1590 ``Probabilistic Structures in Evolution'', project BL 1105/5-1, EB and AGC by the Berlin Mathematical School.

\bibliographystyle{alpha}

\bibliography{Bib_WFdiffusion}

\end{document}